\newcommand{\PP}{\mathbb{P}}
\newcommand{\E}{\mathbb{E}}
\newcommand{\B}{\mathbf B}
\newcommand{\Z}{\mathbb{Z}}
\newcommand{\C}{\mathbb{C}}
\newcommand{\R}{\mathbb{R}}
\theoremstyle{plain}
\newtheorem{theorem}{Theorem}[section]
\newtheorem{lemma}[theorem]{Lemma}
\newtheorem{corollary}[theorem]{Corollary}
\theoremstyle{definition}
\theoremstyle{remark}
\numberwithin{equation}{section}
\DeclareMathAlphabet{\mathbbold}{U}{bbold}{m}{n}
\author[Charles Burnette]{Charles Burnette}
\title[Involution factorizations of Ewens random permutations]{Involution factorizations of Ewens random permutations}
\affiliation{Xavier University of Louisiana, New Orleans, USA}
  \keywords{asymptotic lognormality, Erd\H{o}s-Tur\'{a}n law, Ewens Sampling Formula, Feller coupling, involutions, Mellin transform, saddle-point method}
\begin{document}
\publicationdata{vol. 27:2}{2025}{11}{10.46298/dmtcs.11602}{2023-07-19; 2023-07-19; 2024-12-24}{2025-03-28}
\maketitle
\begin{abstract}
\vspace*{2ex}
 An involution is a bijection that is its own inverse.  
Given a permutation $\sigma$ of $[n],$ let $\mathsf{invol}(\sigma)$ denote the number 
of ways $\sigma$ can be expressed as a composition of two involutions of $[n].$
We prove that the statistic $\mathsf{invol}$ is asymptotically lognormal when the
symmetric groups $\mathfrak{S}_n$ are each equipped with Ewens Sampling Formula
probability measures of some fixed positive parameter $\theta.$
This paper strengthens and generalizes previously determined results about
the limiting distribution of $\log(\mathsf{invol})$ for uniform random permutations,
i.e. the specific case of $\theta = 1$. We also investigate the first two moments of
$\mathsf{invol}$ itself, detailing the phase transition in asymptotic behavior at $\theta = 1,$ and provide a functional refinement and a demonstrably optimal convergence rate for the log-Gaussian limit law.
\end{abstract}

\section{Introduction}
\label{sec:in}
An involution is a bijection that is its own inverse. The involutions in the symmetric group $\mathfrak{S}_n$
are precisely those permutations whose cycles are all of length 1 or 2.
This paper is concerned with decomposing random permutations into two involutive factors
and the probabilistic analysis thereof. Recent interest in involution factorizations in $\mathfrak{S}_n$
can be traced to \cite{RV} who proposed that the dynamical behavior of reversible
rational maps over a finite field can be modeled by compositions of two random involutions with a
prescribed number of fixed points. Unrestricted products of two involutions, however, are of
singular interest as well. For instance, \cite{PT} outline hypothetical links between
enumerating involution factorizations, rim-hook tableaux, and the irreducible characters of
$\mathfrak{S}_n$ that are still in need of further clarification. Other applications have surfaced in
comparative genomics (e.g. \cite{BEF}, \cite{CM}) and the analysis of algorithms (e.g. \cite{KSY} and \cite{YEMR}).

If $\sigma$ is a permutation of $[n],$
let $\mathsf{invol}(\sigma)$ denote the number of function compositional factorizations of $\sigma$ into
the product of two involutions of $[n],$ that is, the number of ordered pairs $(\tau_1, \tau_2)$ of involutions in $\mathfrak{S}_n$
such that $\sigma = \tau_2 \circ \tau_1.$ It is known that

\begin{equation}
\label{Nproduct}
\mathsf{invol}(\sigma) = \B(\sigma)\prod_{k = 1}^{n}\sum\limits_{j=0}^{\lfloor c_{k}/2\rfloor}
\frac{(c_{k})_{2j}}{(2k)^{j}j!},
\end{equation}
where $(x)_r$ is the falling factorial $x(x - 1)\cdots(x - r + 1),$ $c_{k}:=c_{k}(\sigma)$
denotes the number of cycles of length $k$ (or, more succinctly, $k$-cycles)
that $\sigma$ possesses, and $\B(\sigma) = \prod k^{c_k}$ is the product of its cycle lengths.
To the author's knowledge, the earliest proofs of (\ref{Nproduct}) are independently
attributed to \cite{LugoCompose} and \cite{PT}.
Both of their derivations rely on identifying involutions with partial matchings and, consequently,
the composition of two involutions with dichromatic edge-colored graphs
in which every vertex is incident to exactly two edges, one edge per color.
A derivation of (\ref{Nproduct}) that merely exploits the canonical group action of $\mathfrak{S}_n$
on $[n]$ appears in \cite{BS}.

Even without (\ref{Nproduct}), one notices that $\mathsf{invol}(\sigma)$ is a class function by virtue
of the conjugacy invariance of the set of involutions. It is therefore natural to wonder
what the distribution of $\mathsf{invol}$ is for permutations drawn from
conjugation-invariant probability measures on $\mathfrak{S}_n$
where cycles are multiplicatively weighted.
The archetypal example of such a measure is the Ewens Sampling Formula
with parameter $\theta > 0,$ hereafter abbreviated $\textup{ESF}(\theta).$
Read the survey article by \cite{Crane} for a lucid and concise synopsis of the literature on $\textup{ESF}(\theta).$

For each positive integer $n,$ the Ewens probability measure $\mathbb{P}_{\theta, n}$ is a deformation of the uniform measure $\nu_n$ on $\mathfrak{S}_n$ obtained by incorporating the following sampling bias:
\begin{equation}
\mathbb{P}_{\theta, n}(\sigma = \sigma_0) = \frac{n!\theta^K}{\theta^{(n)}}\nu_n(\sigma = \sigma_0) = \frac{\theta^K}{\theta^{(n)}},
\end{equation}
where $\theta^{(n)}$ is the rising factorial $\theta(\theta + 1)\cdots(\theta + n - 1)$ and $K = \sum c_k$ is the total number of cycles that $\sigma$ has. (Note, in particular, that $\mathbb{P}_{1, n} = \nu_n.$) More specifically,
the probability that a permutation of $[n]$ chosen according to $\textup{ESF}(\theta)$ has cycle index $(a_1, \ldots, a_n) \in \Z_{\geq 0}^n$ is
\begin{equation}
\label{Ewensdistribution}
\PP_{\theta,n}(c_k = a_k\ \text{for}\ k = 1, \ldots, n) = \mathbbold{1}\left\{\sum_{\ell = 1}^{n} \ell a_{\ell} = n\right\}\frac{n!}{\theta^{(n)}}\prod_{k = 1}^{n} \left(\frac{\theta}{k}\right)^{a_k}\frac{1}{a_{k}!},
\end{equation}
where $\mathbbold{1}\{\cdot\}$ denotes the indicator function. We shall also refer to random
permutations chosen according ot $\textup{ESF}(\theta)$ as $\theta$-weighted permutations, as is also done in \cite{Lugo}.

A pivotal feature of $\text{ESF}(\theta)$ is that the joint distribution of the cycle counts
$c_k$ converges to that of independent Poisson random variables of respective intensity $\theta/k.$
Now consider a sequence $\alpha = (\alpha_1, \alpha_2, \ldots)$ of mutually independent
random variables with $\alpha_k \sim \text{Poisson}(\theta/k),$ for $k = 1, 2, \ldots,$ and define
\begin{equation}
\mathsf{invol}_n(\alpha) = \B_n(\alpha)\prod_{k = 1}^{n}\sum\limits_{j=0}^{\lfloor \alpha_{k}/2\rfloor}
\frac{(\alpha_{k})_{2j}}{(2k)^{j}j!}, \hspace{2.5em} \text{where} \hspace{2.5em} \B_n(\alpha) = \prod_{k = 1}^{n} k^{\alpha_k}.
\end{equation}
Note that, as $n \to \infty,$
\begin{align}
\label{meandifference}
\E|\!\log\mathsf{invol}_n(\alpha) - \log\B_n(\alpha)| &= \sum_{k = 1}^{n} \E\log\!\left(\sum\limits_{j=0}^{\lfloor \alpha_{k}/2\rfloor}
\frac{(\alpha_{k})_{2j}}{(2k)^{j}j!}\right)
\\ &\leq \sum_{k = 1}^{n} \frac{1}{2k}\E \alpha_k^2 = O(1),
\end{align}
whereas
\begin{equation}
\label{var}
\text{Var}(\log\B_n(\alpha)) = \sum_{k = 1}^{n} \frac{\theta\log^2\!k}{k} = \frac{\theta}{3}\log^3\!n(1 + o(1)).
\end{equation}
Markov's inequality thus dictates that
\begin{equation}
\frac{|\log\mathsf{invol}_n(\alpha) - \log\B_n(\alpha)|}{\sqrt{\text{Var}(\log\B_n(\alpha))}} \overset{\mathbb{P}}{\longrightarrow} 0.
\end{equation}
One would then suspect that something similar holds for $\log\mathsf{invol}$ and $\log\B$ in $\text{ESF}(\theta),$ assuming both are plausibly well-approximated by their Poissonized counterparts.

In this paper, we will establish both central limit and functional limit theorems for $\log\mathsf{invol}$
over Ewens random permutations. This will be accomplished by generalizing
the key results of \cite{BS}. It was particularly shown in \cite{BS} that $\mathsf{invol}$
is asymptotically lognormal when each $\mathfrak{S}_n$ is endowed with the uniform probability
measure. The proof of this hinged on the observation that, for asymptotically almost all permutations
$\sigma \in \mathfrak{S}_n,$ the statistic $\mathsf{invol}(\sigma)$ is readily controlled by
the simpler statistic $\B(\sigma)$ together with the fact that $\B$ is itself an asymptotically lognormal
random variable over uniform random permutations. As we shall soon witness,
these facts remain true in the setting of $\textup{ESF}(\theta).$ The polynomial factors of (\ref{Nproduct}) ony influence the remainder terms in the weak limit theorems stated in Section \ref{sec:refine}.

On the other hand, the distribution of $\mathsf{invol}$ itself, when left untouched by
any rescaling or standardization, is heavily right-skewed.
Practically all of its range is concentrated far below its mean.
To demonstrate this, we will derive the leading-term asymptotics of
the first and second moments of $\mathsf{invol}$ for general $\theta$-weighted permutations.
The latter will highlight certain discrepancies in the variability of
$\mathsf{invol}$ for $0 < \theta \leq 1$ versus $ \theta > 1.$

We remark that the limiting distribution of $\log \mathsf{invol}$ falls within the scope of the general theory of additive functionals on the cycle multiplicities of random permutations (see e.g. \cite{ManstaviciusOriginal}, \cite{BM}). The original contribution of the present paper, however, is the singularity analysis of the generating functions that are used to concretely describe the distribution of $\mathsf{invol}.$ Moreover, all of the fine moment asymptotics that can be found throughout Sections \ref{sec:expectedtypical} through \ref{sec:refine} have not appeared in the literature before.

Some classical results and techniques from asymptotic analysis are cited in a piecemeal
fashion throughout this paper. As is common for random permutation and integer partition
statistics, we will encounter generating functions that demand a careful examination
by way of the saddle-point method and Mellin transforms. Technical preliminaries are
briefly discussed in the appendix, Section \ref{sec:appendix}, at the end.

\section{The Expected and Typical Values of $\mathsf{invol}$}
\label{sec:expectedtypical}
Let $\PP_n$ denote the probability measure on $\mathfrak{S}_n$
corresponding to $\textup{ESF}(\theta).$ For the sake of notational brevity,
we will henceforth omit the $\theta$ in $\PP_{\theta, n}$ and associated operators.

\begin{theorem}
\label{average}
The average number of involution factorizations for a permutation
chosen according to the Ewens measure $\PP_n$ is
\[\E_n\mathsf{invol} = \frac{\theta^{\frac{1 - \theta^2}{4}}\Gamma(\theta)}{2^{\frac{\theta^2}{2} + 1}e^{\frac{\theta}{2}}\sqrt{\pi}}e^{2\sqrt{\theta n}}n^{\frac{\theta^2}{4} - \theta + \frac{1}{4}}\!\left(1 + O(n^{-\frac{1}{2}})\right).\]
\end{theorem}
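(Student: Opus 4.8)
The plan is to reduce the computation of $\E_n\mathsf{invol}$ to a single coefficient extraction from an explicit generating function and then to analyze that coefficient by the saddle-point method. Because $\mathsf{invol}$ is, through (\ref{Nproduct}), a multiplicative functional of the cycle counts $c_1,\ldots,c_n$, and because the Ewens weights in (\ref{Ewensdistribution}) factor over cycle lengths, the sum defining the expectation decouples once $n$ is released from the constraint $\sum \ell a_\ell = n$. Concretely, I would first prove the identity
\[
\sum_{n=0}^{\infty}\frac{\theta^{(n)}}{n!}\,\E_n\mathsf{invol}\;x^n \;=\; \frac{e^{\theta x/(1-x)}}{(1-x^2)^{\theta^2/2}},
\]
valid as a formal power series and analytic on the open unit disk.

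To establish this identity I would multiply (\ref{Ewensdistribution}) by (\ref{Nproduct}), so that the weight attached to the cycle index $(a_1,\ldots,a_n)$ becomes $\tfrac{n!}{\theta^{(n)}}\prod_k \tfrac{1}{a_k!}(\theta/k)^{a_k}f_k(a_k)$, where $f_k(m)=k^m\sum_{j=0}^{\lfloor m/2\rfloor}(m)_{2j}/((2k)^j j!)$. Summing over all $n$ and all cycle indices with $x^n=\prod_k x^{k a_k}$ turns the constrained sum into an unconstrained product over $k$ of local factors $\sum_{m\geq 0}(\theta/k)^m f_k(m)x^{km}/m!$. The key computation is that each local factor collapses: since $(\theta/k)^m k^m=\theta^m$, writing $(m)_{2j}/m!=1/(m-2j)!$ and reversing the order of summation gives $\exp(\theta x^k)\exp(\theta^2 x^{2k}/(2k))$. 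Taking the product over $k\geq 1$ and summing the two resulting series, $\sum_k \theta x^k=\theta x/(1-x)$ and $\sum_k \theta^2 x^{2k}/(2k)=-\tfrac{\theta^2}{2}\log(1-x^2)$, yields the claimed closed form $F(x)$.

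With $F(x)$ in hand I would extract $a_n:=[x^n]F(x)$ from $a_n=\tfrac{1}{2\pi i}\oint e^{\phi(x)}\,dx$ with $\phi(x)=\tfrac{\theta x}{1-x}-\tfrac{\theta^2}{2}\log(1-x^2)-(n+1)\log x$. The essential singularity at $x=1$ forces a saddle $x_0\to 1$; solving $\phi'(x_0)=0$ gives the dominant balance $\theta/(1-x_0)^2\sim n$, hence $1-x_0=\sqrt{\theta/n}\,(1+o(1))$. Expanding to the required order produces
\[
\phi(x_0)=2\sqrt{\theta n}-\tfrac{\theta}{2}+\tfrac{\theta^2}{4}\log n-\tfrac{\theta^2}{4}\log\theta-\tfrac{\theta^2}{2}\log 2+O(n^{-1/2}),
\]
while the curvature is dominated by $\phi''(x_0)\sim 2\theta/(1-x_0)^3\sim 2\theta^{-1/2}n^{3/2}$, so the Gaussian contributes $(2\pi\phi''(x_0))^{-1/2}\sim \theta^{1/4}/(2\sqrt{\pi}\,n^{3/4})$. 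Assembling these gives the asymptotics of $a_n$, and multiplying by $n!/\theta^{(n)}\sim\Gamma(\theta)\,n^{1-\theta}$ (from Stirling's formula) merges the powers of $n$ into $\tfrac{\theta^2}{4}-\theta+\tfrac14$ and reproduces the stated constant $\theta^{(1-\theta^2)/4}\Gamma(\theta)/(2^{\theta^2/2+1}e^{\theta/2}\sqrt{\pi})$.

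The main obstacle is the rigorous execution of the saddle-point step rather than the bookkeeping. One must (i) justify that the integral over $|x|=x_0$ is dominated by a short arc about $x_0$ of length of order $n^{-3/4}$ and that the complementary arc is exponentially negligible, i.e. verify a Hayman-type admissibility condition or bound the integrand directly on the circle; and (ii) carry the expansions of $\phi(x_0)$ and $\phi''(x_0)$ to high enough order to certify the uniform error $O(n^{-1/2})$ rather than merely $o(1)$. Tracking the subleading correction to the saddle location beyond the leading $\sqrt{\theta/n}$, and confirming that the $(1+x)^{-\theta^2/2}$ and $x^{-(n+1)}$ factors each contribute only a $1+O(n^{-1/2})$ relative correction, is where the delicate analysis lies; I expect to handle it with the standard estimates collected in the appendix.
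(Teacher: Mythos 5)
Your proposal is correct and takes essentially the same route as the paper: the identical generating-function identity $F(x)=e^{\theta x/(1-x)}(1-x^2)^{-\theta^2/2}$ (the paper derives the local factors via the Hermite-polynomial exponential generating function, you by a direct interchange of summation over $m=2j+r$ — the same computation in different packaging), followed by saddle-point asymptotics for $[x^n]F(x)$ and the Stirling estimate $n!/\theta^{(n)}\sim\Gamma(\theta)n^{1-\theta}$. The only difference is that the paper outsources the coefficient extraction to Wright's ready-made result (Corollary \ref{Wrightasymptotic}), which is precisely the rigorously packaged form of the saddle-point analysis you propose to carry out by hand, and your saddle location $1-x_0\sim\sqrt{\theta/n}$, exponent expansion, and curvature prefactor $\theta^{1/4}/(2\sqrt{\pi}\,n^{3/4})$ all agree with what that corollary yields.
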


\begin{proof}
For each $\lambda \in \Z_{\geq 0}^n,$ set $\mathcal{A}_{\lambda} = \{\sigma \in \mathfrak{S}_n : \sigma\ \text{has cycle type}\ 1^{\lambda_1} \cdots n^{\lambda_n}\}.$ Consider the power series
\begin{equation}
F(z) = 1 + \sum_{n = 1}^{\infty} (\E_{n}\mathsf{invol})\frac{\theta^{(n)}}{n!}z^n.
\end{equation}
By the law of total expectation,
\begin{equation}
F(z) = 1 + \sum_{n = 1}^{\infty} \left(\sum_{\lambda \vdash n} \PP_{n}(\sigma \in \mathcal{A}_{\lambda})\E_{n}(\mathsf{invol}(\sigma)\, |\, \sigma \in \mathcal{A}_{\lambda})\right)\frac{\theta^{(n)}}{n!}z^n,
\end{equation}
where $\lambda \vdash n$ means that $\lambda \in \Z_{\geq 0}^n$ and $\sum k\lambda_k = n.$ As pointed out in Theorem 2.5 of \cite{BS}, 
\begin{equation}
\mathsf{invol}(\sigma) = \prod_{k = 1}^{n} k^{c_k(\sigma)}\frac{He_{c_k(\sigma)}(i\sqrt{k})}{(i\sqrt{k})^{c_k(\sigma)}},
\end{equation}
where $He_m$ is the probabilists' Hermite polynomial
$He_{m}(x) = m!\sum_{r = 0}^{\lfloor m/2 \rfloor} 
\frac{(\textendash1)^r}{r!(m - 2r)!}\frac{x^{m - 2r}}{2^r}.$ Thus
\begin{align}
F(z) &= 1 + \sum_{n = 1}^{\infty} \left(\sum_{\lambda \vdash n} \left(\frac{n!}{\theta^{(n)}} \prod_{k = 1}^{n} \left(\frac{\theta}{k}\right)^{\lambda_k}\frac{1}{\lambda_k!}\right)\!\left(\prod_{k = 1}^{n} k^{\lambda_k}\frac{He_{\lambda_k}(i\sqrt{k})}{(i\sqrt{k})^{\lambda_k}}\right)\right)\frac{\theta^{(n)}}{n!}z^n
\\ &= \prod_{k = 1}^{\infty} \left(\sum_{\lambda_k = 0}^{\infty} \frac{He_{\lambda_k}(i\sqrt{k})}{\lambda_k!}\left(\frac{\theta z^k}{i\sqrt{k}}\right)^{\lambda_k}\right)
\end{align}
Invoking the exponential generating function $\exp(xt - t^2/2) = \sum\limits_{n = 0}^{\infty} He_n(x)\frac{t^n}{n!}$ (an assortment of derivations are available in \cite{KKKD}, \cite{Kim}, \cite{KM}), we see that
\begin{equation}
\label{bgf}
F(z) = \prod_{k = 1}^{\infty} \exp\!\left(\theta z^k + \frac{\theta^2z^{2k}}{2k}\right) = \frac{\exp(\theta z/(1 - z))}{(1 - z^2)^{\frac{\theta^2}{2}}}.
\end{equation}
Hence
\begin{equation}
\E_{n}\mathsf{invol} = \frac{n!}{\theta^{(n)}}[z^n]\frac{\exp(\theta z/(1 - z))}{(1 - z^2)^{\frac{\theta^2}{2}}}.
\end{equation}
Applying Corollary \ref{Wrightasymptotic} with $\beta = -\theta^2\!/2,$
$\phi(z) = e^{-\theta}(1 + z)^{-\frac{\theta^2}{2}},$ and $\alpha = \theta,$
together with the Stirling's approximations $n! = \sqrt{2\pi n}e^{-n}n^n\left(1 + O(n^{-1})\right)$
and $\theta^{(n)} = \frac{\Gamma(\theta + n)}{\Gamma(\theta)} = \frac{\sqrt{2\pi}}{\Gamma(\theta)}e^{-n}n^{n + \theta - \frac{1}{2}}\left(1 + O(n^{-1})\right)$ completes the proof.
\end{proof}

We can further adapt the function constructed in the proof of Theorem \ref{average} to
compute the average number of involution factorizations for permutations with a fixed
number of cycles. Indeed, if we set
\begin{equation}
F(u, z) = 1 + \sum_{n = 1}^{\infty} \sum_{m = 1}^{n} \E_{1, n}\!\left(\mathbbold{1}\{K(\sigma) = m\}\mathsf{invol}(\sigma)\right)u^mz^n,
\end{equation}
then
\begin{align}
F(u, z) &= 1 + \sum_{n = 1}^{\infty} \sum_{m = 1}^{n} \sum_{\substack{\lambda \vdash n \\ \sum \lambda_k = m}} \left(\prod_{k = 1}^{n} \left(\frac{1}{k}\right)^{\lambda_k}\frac{1}{\lambda_k!}\right)\!\left(\prod_{k = 1}^{n} k^{\lambda_k}\frac{He_{\lambda_k}(i\sqrt{k})}{(i\sqrt{k})^{\lambda_k}}\right)\!\left(\prod_{k = 1}^{n} u^{\lambda_k}\right)z^n
\\ &= 1 + \sum_{n = 1}^{\infty} \sum_{\lambda \vdash n}\left(\prod_{k = 1}^{n} \left(\frac{u}{k}\right)^{\lambda_k}\frac{1}{\lambda_k!}\right)\!\left(\prod_{k = 1}^{n} k^{\lambda_k}\frac{He_{\lambda_k}(i\sqrt{k})}{(i\sqrt{k})^{\lambda_k}}\right)z^n
\\ &= \frac{\exp(uz/(1 - z))}{(1 - z^2)^{\frac{u^2}{2}}} = \exp\!\left(\frac{uz}{1 - z} + \frac{1}{2}u^2\log\frac{1}{1-z^2}\right)
\end{align}
is the bivariate ordinary generating function for the expected value of
$\mathbbold{1}\{K(\sigma) = m\}\mathsf{invol}(\sigma)$ over uniform
random permutations, counted according to the size
of the ground set (indicated by the variable $z$) and the total
number of cycles (indicated by the variable $u$). The diagonal of
$F(u, z)$ is given by
\begin{equation}
\lim_{z \to 0} F(x/z, z) = e^{x + \frac{x^2}{2}},
\end{equation}
which is the exponential generating function for the class of involutions.
This makes sense, as the only permutation of $[n]$ with $n$ cycles is the
identity permutation, which is precisely the square of every involution in
$\mathfrak{S}_n.$ By expanding the exponential series in (2.11), we can
work towards retrieving the conditional expectation of $\mathsf{invol}$
for permutations with $m$ cycles by using the ``vertical'' generating function
\begin{equation}
[u^m]F(u, z) = \sum_{k = \lceil m/2 \rceil}^{m} \frac{\binom{k}{m - k}}{k!2^{m - k}}\left(\frac{z}{1 - z}\right)^{2k - m}\log^{m - k}\!\left(\frac{1}{1 - z^2}\right).
\end{equation}
For example, $[u]F(u, z) = z/(1 - z),$ and so $[uz^n]F(u, z) = 1$ for
$n \geq 1.$ This too makes sense as the probability of picking an
$n$-cycle is $1/n,$ and there are exactly $n$ ways to write an
$n$-cycle as a product of two involutions. 

\begin{theorem}
For each positive integer $m,$
\[\E_n(\mathsf{invol}(\sigma)\, |\, K(\sigma) = m) = \frac{n^m}{m!\log^{m - 1}\!n}\!\left(1 + O\!\left(\frac{1}{\log n}\right)\right)\]
as $n \to \infty.$
\end{theorem}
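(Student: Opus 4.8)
The plan is to reduce the computation to the uniform case $\theta = 1$, where the bivariate generating function $F(u,z)$ has already been determined, and then to read off the asymptotics as a ratio of two coefficient extractions.

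First I would observe that conditioning on $\{K = m\}$ eliminates all dependence on $\theta$. Indeed, since $\PP_n(\sigma = \sigma_0) = \theta^{K(\sigma_0)}/\theta^{(n)},$ the weight $\theta^m$ is constant on the event $\{K(\sigma) = m\};$ hence $\PP_n(\,\cdot \mid K = m)$ is precisely the uniform distribution on the set of permutations of $[n]$ with $m$ cycles, regardless of $\theta.$ Consequently $\E_n(\mathsf{invol}(\sigma) \mid K(\sigma) = m) = \E_{1,n}(\mathsf{invol}(\sigma) \mid K(\sigma) = m),$ and I may work entirely within the $\theta = 1$ framework of $F(u,z).$ Writing this conditional expectation as a quotient and passing to generating functions, I obtain
\[\E_n(\mathsf{invol} \mid K = m) = \frac{\E_{1,n}(\mathbbold{1}\{K = m\}\mathsf{invol})}{\PP_{1,n}(K = m)} = \frac{[z^n][u^m]F(u,z)}{[z^n]\frac{1}{m!}\bigl(\log\frac{1}{1-z}\bigr)^m},\]
the denominator being $\lvert c(n,m)\rvert/n!,$ the probability that a uniform permutation has exactly $m$ cycles.

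For the numerator I would use the explicit vertical generating function (2.13). Its dominant singularity at $z = 1$ comes from the single summand $k = m,$ namely $\frac{1}{m!}\bigl(\frac{z}{1-z}\bigr)^m,$ which carries the largest pole order $m$ and no logarithmic factor; every other summand has pole order $2k - m \le m - 2$ and therefore contributes at most $O(n^{m-3}\log n).$ Extracting coefficients exactly gives $[z^n]\frac{1}{m!}\bigl(\frac{z}{1-z}\bigr)^m = \frac{1}{m!}\binom{n-1}{m-1} = \frac{n^{m-1}}{m!\,(m-1)!}\bigl(1 + O(n^{-1})\bigr),$ with the remaining summands absorbed into a relative error of order $O(n^{-2}\log n).$

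It remains to pin down the denominator, and this is where the stated error term $O(1/\log n)$ is born. The quantity $\PP_{1,n}(K = m) = \lvert c(n,m)\rvert/n!$ has the classical fixed-$m$ asymptotic expansion $\frac{(\log n)^{m-1}}{(m-1)!\,n}\bigl(1 + O(1/\log n)\bigr),$ obtainable from the purely logarithmic singularity of $\frac{1}{m!}\bigl(\log\frac{1}{1-z}\bigr)^m$ at $z = 1$ via the singularity-analysis machinery recalled in the appendix. Dividing the two expansions then yields $\frac{n^m}{m!\,(\log n)^{m-1}}\bigl(1 + O(1/\log n)\bigr),$ as claimed. The main obstacle is thus the accurate control of this logarithmic singularity: the numerator is a meromorphic-type term whose coefficients are essentially exact, whereas the denominator's subleading behaviour (the lower powers of $\log n$ in the Stirling-number expansion) is exactly what produces, and bounds, the $O(1/\log n)$ correction, dwarfing the $O(n^{-1})$ error inherited from the numerator.
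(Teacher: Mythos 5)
Your proposal is correct and follows essentially the same route as the paper: reduce to $\theta=1$ via the sufficiency of $K$ (which you justify directly rather than citing), express the conditional expectation as $[u^m z^n]F(u,z)$ divided by ${n\brack m}/n!$, extract the numerator's leading term from the $k=m$ summand of the vertical generating function, and divide by the fixed-$m$ Stirling-number asymptotic, whose $O(1/\log n)$ correction dominates the error. The only differences are presentational (explicit sufficiency argument, and deriving rather than citing Wilf's Stirling asymptotic), so there is nothing to flag.
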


\begin{proof}
Because $K$ is a sufficient statistic for the underlying parameter $\theta,$ which means that the conditional distribution of $\mathsf{invol}$ does not depend on $\theta$ (see section 2.2 of \cite{Crane}), the conditional expectation is constant across the support of $K.$ Therefore
\begin{equation}
\E_n(\mathsf{invol}(\sigma)\, |\, K(\sigma) = m) = \frac{[u^mz^n]F(u, z)}{{n\brack m}/n!},
\end{equation}
where ${n\brack m}$ are the unsigned Stirling numbers of the first kind. Based on the Flajolet-Odlyzko transfer theorem (see \cite{FO})
\begin{equation}
[z^n](1 - z)^{\alpha}\log^k\left(\!\frac{1}{1 - z}\right) = \frac{1}{\Gamma(-\alpha)}n^{-\alpha - 1}\log^k\!n + O\!\left(n^{-\alpha - 2}\log^k\!n\right)\ \text{for}\ \alpha \not\in \Z_{\geq 0},
\end{equation}
we find that
\begin{equation}
[u^mz^n]F(u,z) = \frac{1}{m!}[z^n]\!\left(\frac{z}{1 - z}\right)^m + O(n^{m - 2}) = \frac{n^{m - 1}}{m!(m - 1)!} + O(n^{m - 2})
\end{equation}
Evoke the following crude simplification of Wilf's asymptotic (\cite{Wilf})
\begin{equation}
\frac{{n\brack m}}{n!} = \frac{\log^{m - 1}\!n}{(m - 1)!n} + O\!\left(\frac{\log^{m - 2}\!n}{n}\right)
\end{equation}
to finish the calculation.
\end{proof}

Most of the contribution to the mean comes from a small subset of permutations that admit an
exceptionally large number of involution factorizations. Scrupulous glances at (\ref{Nproduct})
should convince the reader that giving a permutation $\sigma$ numerous occurrences of a distinct long cycle
length or an exorbitant number of short cycles exaggerates the magnitude of $\mathsf{invol}(\sigma).$
But it is rare for a permutation to have such a cycle profile. The \textit{typical} permutation has much fewer
involution factorizations. Direct combinatorial arguments can verify this for uniform random permutations,
which have roughly $e^{2\sqrt{n}}/\sqrt{8\pi en}$ involution factorizations on average whereas the median is $O(e^{\log^2\!n}),$ which we will show in Theorem \ref{skew} and its corollary. (In fact, it is closer to $e^{\frac{1}{2}\log^2\!n},$ as shown in \cite{BS}.) For this, we use the following lemma.

\begin{lemma}
\label{partitionnorm}
For each $\lambda \in \Z_{\geq 0}^n,$ set $\mathcal{A}_{\leq\lambda} = \{\sigma \in \mathfrak{S}_n : c_k(\sigma) \leq \lambda_k\ \text{for every}\ k \in [n]\}.$ Then $\max_{\sigma \in \mathcal{A}_{\leq\lambda}} \mathsf{invol}(\sigma)$ is attained at a permutation with the maximally obtainable amount of shortest cycles.
\end{lemma}

For a cursory explanation as to why, note that if $c_j(\sigma) \geq 1$ and $\lambda = \langle \lambda_1, \lambda_2, \ldots, \lambda_j \rangle$ is a partition of $j$ with $\lambda_1 \neq 1,$ then any permutation obtained by splitting up each $j$-cycle of $\sigma$ into $\lambda_r$ $r$-cycles for $r = 1, 2, \ldots, j$ generally enlarges the number of involution factorizations. (Cf the first two paragraphs in the proof of Theorem 16 in \cite{SS} which discuss how this strategy optimizes the product of the parts of a partition.)

In what follows, we let $\xi_1$ and $\xi_2$ be integers such that $20 \leq \xi_1, \xi_2 \leq n,$ and $\xi_1T_{\xi_2} < n,$ where $T_r$ is the $r^{\text{th}}$ triangular number. Let $\mathcal{P}_{\xi_1, \xi_2}$ be the set of all permutations $\sigma \in \mathfrak{S}_n$ for which $c_k(\sigma) \leq \xi_1$ for all $k \leq \xi_2$ and $c_k(\sigma) \leq 1$ for all $k > \xi_2.$ Observe that the condition $\xi_1T_{\xi_2} < n$ forces each permutation in $\mathcal{P}_{\xi_1, \xi_2}$ to have at least one cycle longer than $\xi_2.$ Lemma III of \cite{ET1} states that
\begin{equation}
\label{ETprob}
\nu_n(\sigma \not\in \mathcal{P}_{\xi_1, \xi_2}) \leq e^{\frac{15}{(\xi_1 + 1)!} + \frac{3}{\xi_2}} - 1.
\end{equation}
We can adapt this result to bound the median number of involution factorizations in uniform random permutations.

\begin{theorem}
\label{skew}
If $H_n$ denotes the $n^{\text{th}}$ harmonic number and
\[s_n^2 = 2\sum_{j = 1}^{n - 1} \frac{H_j}{j + 1} - H_n(H_n - 1),\]
then at least $n!(2 - 1/\xi_1^2 - \exp(15/(\xi_1 + 1)! + 3/\xi_2))$ permutations $\sigma \in \mathfrak{S}_n$ satisfy the inequality
\[\mathsf{invol}(\sigma) < (\xi_2!)^{\xi_1}n^{\lceil H_n + \xi_1s_n\rceil - \xi_2}e^{\frac{1}{2}\xi_1^2H_{\xi_2}}.\]
\end{theorem}

\begin{proof}
Let $\kappa(\sigma)$ denote the number of distinct cycle lengths present in the cycle decomposition of $\sigma,$ and let $\mathcal{E}_{\xi_1}$ be the event that $\kappa(\sigma) \leq H_n + \xi_1s_n.$ It is folklore that $H_n$ and $s_n$ are the mean and standard deviation, respectively, of $K$ over uniform random permutations of $[n].$ This together with (\ref{ETprob}), the union bound, and Chebyshev's inequality tell us that
\begin{align}
\nu_n(\sigma \not\in \mathcal{P}_{\xi_1, \xi_2} \cap \mathcal{E}_{\xi_1}) &\leq e^{\frac{15}{(\xi_1 + 1)!} + \frac{3}{\xi_2}} - 1 + \nu_n(\kappa(\sigma) > H_n + \xi_1s_n)
\\ &\leq e^{\frac{15}{(\xi_1 + 1)!} + \frac{3}{\xi_2}} - 1 + \nu_n(|K(\sigma) - H_n| \geq \xi_1s_n)
\\ &\leq e^{\frac{15}{(\xi_1 + 1)!} + \frac{3}{\xi_2}} - 1 + \frac{1}{\xi_1^2}.
\end{align}

Now let $\sigma^{\text{max}}$ be a permutation for which $\mathsf{invol}$ attains its maximum value over $\mathcal{P}_{\xi_1, \xi_2} \cap \mathcal{E}_{\xi_1}.$ By Lemma \ref{partitionnorm}, if we set $\xi_3 = \lceil H_n + \xi_1s_n\rceil,$ then $c_k(\sigma^{\text{max}}) = \xi_1$ for $1 \leq k \leq \xi_2,$ and $c_k(\sigma^{\text{max}}) = 1$ for up to an additional $\xi_3 - \xi_2$ larger integers $k.$ Hence,
\begin{align}
\mathsf{invol}(\sigma^{\text{max}}) &= \B(\sigma^{\text{max}})\prod_{k = 1}^{n} \sum\limits_{j=0}^{\lfloor c_{k}(\sigma^{\text{max}})/2\rfloor}\frac{(c_{k}(\sigma^{\text{max}}))_{2j}}{(2k)^{j}j!}
\\ &\leq \left(\prod_{k = 1}^{\xi_2} k^{\xi_1}\right)\cdot\left(\prod_{k = \xi_2 + 1}^{\xi_3} n\right)\cdot\left(\prod_{k = 1}^{\xi_2} \sum\limits_{j=0}^{\lfloor \xi_2/2\rfloor}\frac{(\xi_1)_{2j}}{(2k)^{j}j!}\right)
\\ &\leq (\xi_2!)^{\xi_1}\cdot n^{\xi_3 - \xi_2}\cdot\prod_{k = 1}^{\xi_2}\exp\!\left(\frac{\xi_1^2}{2k}\right) = (\xi_2!)^{\xi_1}n^{\xi_3 - \xi_2}e^{\frac{1}{2}\xi_1^2H_{\xi_2}}
\end{align}
\end{proof}

Setting $\xi_1 = \xi_2$ in Theorem \ref{skew} and employing the Stirling bound from \cite{Robbins}
\[n! < \sqrt{2\pi}n^{n+\frac{1}{2}}e^{-n + \frac{1}{12n}}\]
along with the elementary bounds $H_n < \log n + 1$ and $s_n < \sqrt{\log n}$ begets the following.

\begin{corollary}
\label{skewer}
If $\xi = \xi(n) \to \infty,$ but slowly enough so that $\xi = o\!\left(\sqrt{\log n}\right),$ then all but $o(n!)$ permutations $\sigma \in \mathfrak{S}_n$ satisfy the inequality
\[\mathsf{invol}(\sigma) < e^{(1 + O(\xi\log^{-1/2}\!n))\log^2\!n}.\]
\end{corollary}

Goncharov's limit law for $K$ over $\textup{ESF}(\theta)$ (Theorem 3.5, \cite{ABT1}) and the upcoming Lemma \ref{probabilistic} indicate that, assuming the same notations of the previous proof, asymptotically almost all $\theta$-weighted permutations satisfy the analogous inequality
\[\mathsf{invol}(\sigma) < e^{(\theta + O(\xi\log^{-1/2}\!n))\log^2\!n}.\]
However, we can conclude something even stronger. Based on Theorem \ref{mainthm}, if $\sigma$ is a $\theta$-weighted permutation of $[n]$ and $\epsilon > 0,$ then w.h.p.,
\[e^{(\frac{\theta}{2} - \epsilon)\log^2\!n} < \mathsf{invol}(\sigma) < e^{(\frac{\theta}{2} + \epsilon)\log^2\!n}.\]

\section{A Close Look at the Second Moment of $\mathsf{invol}$}
\label{sec:secondmoment}
Ultimately, the asymptotic lognormality of $\mathsf{invol}$ for $\theta$-weighted permutations
prevents, say, the method of moments from succeeding.
(See, for instance, Example 30.2 on pg. 389 of \cite{Billingsley}.) Ignoring that, we
can still observe that the second moment of $\mathsf{invol}$ is quite obstreperous;
its growth rate vastly outpaces the first moment, and the extent to which it does depends
on whether $\theta > 1$.

For a hint at this disparity, note that the Ewens Sampling Formula favors permutations with many cycles whenever $\theta > 1.$ This inflates the likelihood of drawing a permutation with a lot of short cycles, which is when $\mathsf{invol}(\sigma)$ is at its biggest. So $\E_n\mathsf{invol}^2$ grows drastically faster than it does in the $\theta < 1$ case and falls under an entirely different asymptotic regime.

The ensuing analysis is included here not only to contextualize
the sheer futility of the method of moments in this situation but also to help cast light on
the statistical spread of $\mathsf{invol}$ for $\theta$-weighted permutations
and how $\theta = 1$ acts as a ``phase transition point'' for the variance.
Reprising the notation presented in the proof of Theorem \ref{average}, we consider the generating function
\begin{align}
G(z) &= 1 + \sum_{n = 1}^{\infty} (\E_n\mathsf{invol}^2)\frac{\theta^{(n)}}{n!}z^n
\\ &=1 + \sum_{n = 1}^{\infty} \left(\sum_{\lambda \vdash n} \PP_{n}(\sigma \in \mathcal{A}_{\lambda})\E_{n}\!\left(\mathsf{invol}^2\, |\, \sigma \in \mathcal{A}_{\lambda}\right)\right)\frac{\theta^{(n)}}{n!}z^n
\\ &= 1 + \sum_{n = 1}^{\infty} \left(\sum_{\lambda \vdash n} \left(\frac{n!}{\theta^{(n)}} \prod_{k = 1}^{n} \left(\frac{\theta}{k}\right)^{\lambda_k}\frac{1}{\lambda_k!}\right)\left(\prod_{k = 1}^{n} k^{2\lambda_k}\frac{\left(He_{\lambda_k}(i\sqrt{k})\right)^2}{(i\sqrt{k})^{2\lambda_k}}\right)\right)\frac{\theta^{(n)}}{n!}z^n
\\ &= \prod_{k = 1}^{\infty}\left(\sum_{\lambda_k = 0}^{\infty} \frac{\left(He_{\lambda_k}(i\sqrt{k})\right)^2}{\lambda_k!}\cdot(-\theta z^k)^{\lambda_k}\right)
\end{align}
We now recall the probabilists' version of the Mehler kernel (see \cite{Kibble})
\[\sum_{n = 0}^{\infty} He_n(x)He_n(y)\frac{t^n}{n!} = \frac{1}{\sqrt{1 - t^2}}\exp\!\left(-\frac{t^2(x^2 + y^2) - 2txy}{2(1 - t^2)}\right)\]
to observe that
\begin{equation}
\label{secondmoment}
G(z) = \prod_{k = 1}^{\infty} \frac{1}{\sqrt{1 - \theta^2z^{2k}}}\exp\!\left(\frac{\theta kz^k}{1 - \theta z^k}\right).
\end{equation}

\begin{theorem}
If $\theta > 1,$ then
\[\E_n\mathsf{invol}^2 = \frac{K_{\theta}\Gamma(\theta)}{2\sqrt{2\pi}}\theta^ne^{2\sqrt{n} - \frac{1}{2}}n^{\frac{1}{2} - \theta}\!\left(1 + O(n^{-\frac{1}{2}})\right).\]
\end{theorem}

\begin{proof}
Note that $G$ has a dominant singularity at $z = 1/\theta$ and the function $\phi$ given by
\begin{equation}
\phi(z) = \frac{\exp(-1)}{\sqrt{1 + \theta z}}\prod_{k = 2}^{\infty} \frac{1}{\sqrt{1 - \theta^2z^{2k}}}\exp\!\left(\frac{\theta kz^k}{1 - \theta z^k}\right)
\end{equation}
is regular in the disc $|z| < 1/\theta.$ Corollary \ref{Wrightasymptotic} can thus be applied to get
\begin{equation}
[z^n]G(z) = [z^n](1-\theta z)^{-\frac{1}{2}}\phi(z)\exp\!\left(\frac{1}{1-\theta z}\right) = \frac{\theta^n}{n^{\frac{1}{2}}}\frac{K_{\theta}\exp(2\sqrt{n} - 1/2)}{2\sqrt{2\pi}}\!\left(1 + O(n^{-\frac{1}{2}})\right)
\end{equation}
where
\begin{equation}
K_{\theta} = \prod_{k = 2}^{\infty} \frac{1}{\sqrt{1 - \theta^{2 - 2k}}}\exp\!\left(\frac{k}{\theta^{k-1} - 1}\right).
\end{equation}
Hence
\begin{equation}
\E_n\mathsf{invol}^2 = \frac{n!}{\theta^{(n)}}[z^n]G(z) = \frac{K_{\theta}\Gamma(\theta)}{2\sqrt{2\pi}}\theta^ne^{2\sqrt{n} - \frac{1}{2}}n^{\frac{1}{2} - \theta}\!\left(1 + O(n^{-\frac{1}{2}})\right).
\end{equation}
\end{proof}

Here we see that $\E_n(\mathsf{invol})^2 = \omega(\E_n\mathsf{invol}).$
However, if $0 < \theta \leq 1,$ then $G$ has a radius of convergence of 1,
but behaves too chaotically around the unit circle to be analytically continued beyond it.
In this case, we appeal to the exp-log reorganization of (\ref{secondmoment}):
\begin{align}
G(z) &= \exp\!\left(\sum_{k = 1}^{\infty} \left[-\frac{1}{2}\log(1 - \theta^2z^{2k}) + \frac{\theta kz^{k}}{1 - \theta z^k}\right]\right)
\\ &= \exp\!\left(\sum_{k = 1}^{\infty} \sum_{\ell = 1}^{\infty} \left[\frac{\theta^{2\ell}z^{2k\ell}}{2\ell} + k\theta^{\ell}z^{k\ell}\right]\right)
\\&= \exp\!\left(\sum_{\ell = 1}^{\infty} \left[\frac{\theta^{2\ell}}{2\ell}\frac{z^{2\ell}}{(1 - z^{2\ell})} + \frac{\theta^{\ell}z^{\ell}}{(1 - z^{\ell})^2}\right]\right).
\end{align}
From this, we can attain the following upper bound on $\left|\log G(z)\right|$ for when $0 < |z| < 1:$

\begin{align}
\left|\log G(z)\right| &\leq \frac{\theta|z|}{|1 - z|^2} + \sum_{\ell = 1}^{\infty} \frac{\theta^{2\ell}}{2\ell}\frac{|z|^{2\ell}}{|1 - z^{2\ell}|} + \sum_{\ell = 2}^{\infty} \frac{\theta^{\ell}|z|^{\ell}}{|1 - z^{\ell}|^2}
\\ &< \frac{\theta}{|1 - z|^2} + \sum_{\ell = 1}^{\infty} \frac{\theta^{2\ell}}{(2\ell)^2}\frac{2\ell|z|^{2\ell}}{(1 - |z|^{2\ell})} + \sum_{\ell = 2}^{\infty} \frac{\theta^{\ell}|z|^{\ell}}{(1 - |z|^{\ell})^2}
\\ &< \frac{\theta}{|1 - z|^2} + \frac{1}{1 - |z|}\sum_{\ell = 1}^{\infty} \frac{\theta^{2\ell}}{(2\ell)^2}\frac{2\ell}{(|z|^{-1} + |z|^{-2} + \cdots + |z|^{-2\ell})} \nonumber
\\ & \hspace{2.5em} + \frac{1}{(1 - |z|)^2}\sum_{\ell = 2}^{\infty} \frac{\theta^{\ell}}{(1 + |z| + \cdots + |z|^{\ell - 1})(1 + |z|^{-1} + \cdots + |z|^{-(\ell - 1)})} \label{AM}
\\ &< \frac{\theta}{|1 - z|^2} + \frac{1}{1 - |z|}\sum_{\ell = 1}^{\infty} \frac{\theta^{2\ell}}{4\ell^2} + \frac{1}{(1 - |z|)^2}\sum_{\ell = 2}^{\infty} \frac{\theta^{\ell}}{\ell^2} \label{HM}
\\ &= \frac{\theta}{|1 - z|^2} + \frac{\text{Li}_2(\theta^2)}{4(1 - |z|)} + \frac{\text{Li}_2(\theta) - \theta}{(1 - |z|)^2}, \label{estimate}
\end{align}
where $\text{Li}_s(z) = \sum_{n \geq 1} \frac{z^n}{n^s}$ is the polylogarithm function of order $s.$ (Apply the arithmetic mean-harmonic mean inequality on the terms of the second summation to transition from (\ref{AM}) to (\ref{HM}).)

Now in setting $z = e^{-u},$ the function
\[L(u) = \log G(e^{-u}) = \sum_{\ell = 1}^{\infty} \left[\frac{\theta^{2\ell}}{2\ell}\frac{e^{-2\ell u}}{(1 - e^{-2\ell u})} + \frac{\theta^{\ell}e^{-\ell u}}{(1 - e^{-\ell u})^2}\right]\]
is a harmonic sum well-suited for a Mellin transform analysis.
Consult section \ref{sec:mellin} of the appendix to deduce that the Mellin transform of $L(u)$ is
\[L^{\star}(s) = \left(2^{-s - 1}\text{Li}_{s + 1}(\theta^2)\zeta(s) + \text{Li}_s(\theta)\zeta(s - 1)\right)\!\Gamma(s).\]
Note that $\text{Li}_s(z),$ when treated as a function of $s$ and with $z$ held constant, is analytic for $z \in [0, 1),$ because $\text{Li}_s(z)$ converges absolutely for all $s \in \C$ when $|z| < 1.$ If $z = 1,$ then the polylogarithm is simply the Riemann zeta function which has a simple pole at $s = 1.$ We therefore consider the cases $0 < \theta < 1$ and $\theta = 1$ separately.

\begin{theorem}
If $0 < \theta < 1,$ then, as $n \to \infty,$
\[\E_n\mathsf{invol}^2 \sim \frac{(1 - \theta^2)^{\frac{1}{4}}(2\textup{Li}_2(\theta))^{\frac{1}{6}}\Gamma(\theta)n^{\frac{1}{3} - \theta}}{\sqrt{6\pi}}\exp\!\left(\frac{3\textup{Li}_2(\theta)n^{\frac{2}{3}}}{(2\textup{Li}_2(\theta))^{\frac{2}{3}}} + \frac{\textup{Li}_2(\theta^2)n^{\frac{1}{3}}}{(2\textup{Li}_2(\theta))^{\frac{1}{3}}} - \frac{\theta}{12(1 - \theta)}\right).\]
\end{theorem}

\begin{proof}
The Mellin transform $L^{\star}(s)$ has simple poles at $s = 2, 1, 0, -1, \ldots$ Using the standard transfer rule for the transform, we ascertain that
\begin{equation}
\label{Mellin}
L(t) \underset{t \to 0^+}{=} \frac{\text{Li}_2(\theta)}{t^2} + \frac{\text{Li}_2(\theta^2)}{4t} + \frac{1}{4}\log(1 - \theta^2) - \frac{\theta}{12(1 - \theta)} + O(t)
\end{equation}
which, because of the Laurent series expansions of the powers of $t = -\log(z)$ at $z = 1$ and the fact that (\ref{Mellin}) extends to $\C - \R_{< 0},$ corresponds to
\begin{equation}
\log G(z) \underset{z \to 1}{=} \frac{\text{Li}_2(\theta)}{(1 - z)^2} + \frac{\text{Li}_2(\theta^2) - 4\text{Li}_2(\theta)}{4(1 - z)} + \frac{1}{12}\text{Li}_2(\theta) - \frac{1}{8}\text{Li}_2(\theta^2) +  \frac{1}{4}\log(1 - \theta^2) - \frac{\theta}{12(1 - \theta)} + O(1 - z)
\end{equation}
from within the unit disc. We can now conclude that
\begin{equation}
\label{bigG}
G(z) \underset{z \to 1}{=} \tau(z)[1 + O(1 - z)],
\end{equation}
provided $|z| < 1$ with
\begin{equation}
\tau(z) = (1 - \theta^2)^{1/4}\exp\!\left(\frac{\text{Li}_2(\theta)}{(1 - z)^2} + \frac{\text{Li}_2(\theta^2) - 4\text{Li}_2(\theta)}{4(1 - z)} + \frac{1}{12}\text{Li}_2(\theta) - \frac{1}{8}\text{Li}_2(\theta^2) - \frac{\theta}{12(1 - \theta)}\right)
\end{equation}
We claim next that $[z^n]G(z) = [z^n]\tau(z)(1 + o(1)).$ To see this, consider $\gamma,$ the positively oriented circle $|z| = 1 - \left(2\text{Li}_2(\theta)/n\right)^{1/3},$ and split $\gamma$ into the two contours
\[\gamma_1 = \left\{z \in \gamma : |1 - z| < \left(\frac{3\text{Li}_2(\theta)}{n}\right)^{\frac{1}{3}}\right\}\]
and $\gamma_2 = \gamma - \gamma_1.$ By Cauchy's integral formula,
\begin{equation}
\label{cauchy}
[z^n](G(z) - \tau(z)) = \frac{1}{2\pi i}\int_{\gamma_1} \frac{G(z) - \tau(z)}{z^{n + 1}}\,dz + \frac{1}{2\pi i}\int_{\gamma_2} \frac{G(z) - \tau(z)}{z^{n + 1}}\,dz.
\end{equation}
The length of $\gamma_1$ is $O(n^{-1/3}).$ This together with (\ref{bigG}) shows that the first integral over $\gamma_1$ is
\begin{align}
&= O\!\left(\int_{\gamma_1} |z|^{-n}|1 - z|\exp\!\left(\frac{\text{Li}_2(\theta)}{(1 - |z|)^2} + \frac{\text{Li}_2(\theta^2) - 4\text{Li}_2(\theta)}{4(1 - |z|)}\right)|dz|\right)
\\ &= O\!\left(\exp\!\left(\frac{2\text{Li}_2(\theta)n^{\frac{2}{3}}}{(2\text{Li}_2(\theta))^{\frac{2}{3}}}\right)\!\cdot\!n^{-\frac{1}{3}}\!\cdot\!\exp\!\left(\frac{\text{Li}_2(\theta)n^{\frac{2}{3}}}{(2\text{Li}_2(\theta))^{\frac{2}{3}}} + \frac{(\text{Li}_2(\theta^2) - 4\text{Li}_2(\theta))n^{\frac{1}{3}}}{4(2\text{Li}_2(\theta))^{\frac{2}{3}}}\right)\right)\!\cdot\!O(n^{-\frac{1}{3}})
\\ &= O\!\left(n^{-\frac{2}{3}}\exp\!\left(\frac{3\text{Li}_2(\theta)n^{\frac{2}{3}}}{(2\text{Li}_2(\theta))^{\frac{2}{3}}} + \frac{(\text{Li}_2(\theta^2) - 4\text{Li}_2(\theta))n^{\frac{1}{3}}}{4(2\text{Li}_2(\theta))^{\frac{1}{3}}}\right)\right)
\end{align}
On the other hand, estimate (\ref{estimate}) applies over $\gamma_2,$ and so the second integral in (\ref{cauchy}) is
\begin{align}
&O\!\left(\int_{\gamma_2} |z|^{-n}\!\left[\exp\!\left(\frac{\theta}{|1 - z|^2} + \frac{\text{Li}_2(\theta^2)}{4(1 - |z|)} + \frac{\text{Li}_2(\theta) - \theta}{(1 - |z|)^2}\right) + \exp\!\left(\frac{\text{Li}_2(\theta)}{|1 - z|^2} + \frac{\text{Li}_2(\theta^2) - 4\text{Li}_2(\theta)}{4|1 - z|}\right)\!\right]\!|dz|\!\right)
\\ &= O\!\left(\exp\!\left(\frac{2\text{Li}_2(\theta)n^{\frac{2}{3}}}{(2\text{Li}_2(\theta))^{\frac{2}{3}}}\right)\cdot\exp\!\left(\frac{\theta n^{\frac{2}{3}}}{(3\text{Li}_2(\theta))^{\frac{2}{3}}} + \frac{\text{Li}_2(\theta^2)n^{\frac{1}{3}}}{4(2\text{Li}_2(\theta))^{\frac{1}{3}}} + \frac{(\text{Li}_2(\theta) - \theta)n^{\frac{2}{3}}}{(2\text{Li}_2(\theta))^{\frac{2}{3}}}\right)\right. \nonumber
\\ & \qquad \hspace{2.5em} \left. + \exp\!\left(\frac{2\text{Li}_2(\theta)n^{\frac{2}{3}}}{(2\text{Li}_2(\theta))^{\frac{2}{3}}}\right)\cdot\exp\!\left(\frac{\text{Li}_2(\theta)n^{\frac{2}{3}}}{(3\text{Li}_2(\theta))^{\frac{2}{3}}} + \frac{(\text{Li}_2(\theta^2) - 4\text{Li}_2(\theta))n^{\frac{1}{3}}}{4(3\text{Li}_2(\theta))^{\frac{1}{3}}}\right)\right)
\\ &= O\!\left(\exp(An^{\frac{2}{3}})\right),
\end{align}
for some $A < 3\text{Li}_2(\theta)/(2\text{Li}_2(\theta))^{2/3}.$ Therefore
\begin{equation}
\label{error}
[z^n]G(z) = [z^n]\tau(z) + O\!\left(n^{-\frac{2}{3}}\exp\!\left(\frac{3\text{Li}_2(\theta)n^{\frac{2}{3}}}{(2\text{Li}_2(\theta))^{\frac{2}{3}}} + \frac{(\text{Li}_2(\theta^2) - 4\text{Li}_2(\theta))n^{\frac{1}{3}}}{4(2\text{Li}_2(\theta))^{\frac{1}{3}}}\right)\right)
\end{equation}
It is now a sophomore's homework exercise to use Theorem \ref{Wrightgeneral} on $\tau(z),$ especially considering that the required singular expansion is already accounted for in (\ref{Mellin}). The error term in (\ref{error}) is smaller than the leading term asymptotic of $[z^n]\tau(z)$ by a decaying factor of $O(\exp(-\text{Li}_2(\theta)n^{1/3}/(2\text{Li}_2(\theta))^{1/3})).$ In the end, we find that
\begin{equation}
[z^n]G(z) \sim \frac{(1 - \theta^2)^{\frac{1}{4}}(2\text{Li}_2(\theta))^{\frac{1}{6}}}{\sqrt{6\pi}n^{\frac{2}{3}}}\exp\!\left(\frac{3\text{Li}_2(\theta)n^{\frac{2}{3}}}{(2\text{Li}_2(\theta))^{\frac{2}{3}}} + \frac{\text{Li}_2(\theta^2)n^{\frac{1}{3}}}{(2\text{Li}_2(\theta))^{\frac{1}{3}}} - \frac{\theta}{12(1 - \theta)}\right),
\end{equation}
and so the result follows again from the fact that $\E_n\mathsf{invol}^2 = \frac{n!}{\theta^{(n)}}[z^n]G(z).$
\end{proof}

\begin{theorem}
If $\theta = 1,$ then
\[\E_n\mathsf{invol}^2 \sim \frac{n!}{\sqrt[4]{2}\sqrt[12]{\pi}(3n)^{\frac{7}{12}}}\exp\!\left(\frac{1}{2}(3\pi n)^{\frac{2}{3}} - \frac{\pi^2 - 4}{8}\!\left(\frac{3n}{\pi^2}\right)^{\frac{1}{3}} + \frac{7}{24} - \frac{\pi^2}{144}\right).\]
\end{theorem}

\begin{proof}
Here, $G(z)$ is precisely the exponential generating function of $\E_n\mathsf{invol}^2,$ and the Mellin transform becomes
\begin{equation}
L^{\star}(s) = (2^{-s - 1}\zeta(s + 1) + \zeta(s - 1))\zeta(s)\Gamma(s),
\end{equation}
which has simple poles at $s = 2, 1, 0, -1, \ldots,$ and a double pole at $s = 0.$ Hence
\begin{equation}
L(t) \underset{t \to 0^+}{=} \frac{\pi^2}{6t^2} + \frac{\pi^2 - 12}{24t} + \frac{1}{4}\log t - \log\sqrt[4]{2\pi} + \frac{1}{24} + O(t),
\end{equation}
which corresponds to
\begin{equation}
\log G(z) \underset{z \to 1}{=} \frac{\pi^2}{6(1 - z)^2} - \frac{\pi^2 - 4}{8(1 - z)} + \frac{1}{4}\log(1 - z) + \frac{7}{24} - \frac{\pi^2}{144} - \log\sqrt[4]{2\pi} + O(1 - z)
\end{equation}
from within the unit disc. Therefore
\begin{equation}
G(z) = \tau(z)[1 + O(1 - z)],
\end{equation}
provided $|z| < 1$ and
\begin{equation}
\tau(z) = (2\pi(1 - z))^{\frac{1}{4}}\exp\!\left(\frac{\pi^2}{6(1 - z)^2} - \frac{\pi^2 - 4}{8(1 - z)} + \frac{7}{24} - \frac{\pi^2}{144}\right).
\end{equation}
At this point, the outstanding justifications are practically the same as they were for the $\theta \in (0, 1)$ case. It is tedious, but requires nothing new from earlier to figure out that
\begin{equation}
[z^n]G(z) \sim \frac{1}{\sqrt[4]{2}\sqrt[12]{\pi}(3n)^{\frac{7}{12}}}\exp\!\left(\frac{1}{2}(3\pi n)^{\frac{2}{3}} - \frac{\pi^2 - 4}{8}\!\left(\frac{3n}{\pi^2}\right)^{\frac{1}{3}} + \frac{7}{24} - \frac{\pi^2}{144}\right).
\end{equation}
\end{proof}

\section{A Central Limit Theorem for $\log\mathsf{invol}$}
\label{sec:clt}
In this section, we emulate the proof of Theorem 4.3 in \cite{BS}, except now each
$\mathfrak{S}_n$ is equipped with the ESF$(\theta)$ probability measure $\PP_n$
with fixed positive $\theta$. We proceed in three steps. First, $\mathsf{invol}(\sigma)$
is well-approximated by $\B(\sigma)$ for permutations $\sigma$ in which
only $k$-cycles with $k$ under a certain threshold $\xi,$ to be stipulated \textit{a posteriori},
can occur more than once but still not more than $\xi$ times. Call the set of all such
permutations $\mathcal{P}_{\xi}.$ The second step is to show that
$\PP_n(\sigma \in \mathcal{P}_{\xi}) \to 1$ as $n \to \infty.$ Lastly, the
previous two steps are combined to compare the distribution of $\log\mathsf{invol}$
to the distribution of $\log\B.$

There are two notable improvements here over the presentation of \cite{Burnette} and \cite{BS}.
Our present proof of the aforementioned second step is cleaner and considerably streamlined.
Furthermore, a concrete error term for the approximate Gaussian law will be added in the next section. Perhaps
unsurprisingly, the error term matches the convergence rate of the Erd\H{o}s-Tur\'{a}n limit law for
the order of a random permutation that was first determined in \cite{BT}.

\begin{lemma}  
\label{deterministic}
\textup{(\cite{BS})}
Suppose $\sigma \in \mathcal{P}_{\xi}.$ Then there is a constant $c > 0,$ not dependent on $\sigma$ nor $\xi,$ such that $\B(\sigma) \leq \mathsf{invol}(\sigma) \leq   \B(\sigma) \cdot \left( c\xi^{\xi}\right)^{\xi}.$
\end{lemma}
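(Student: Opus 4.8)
The plan is to work directly from the product formula (\ref{Nproduct}), writing
\[
\mathsf{invol}(\sigma) = \B(\sigma)\prod_{k=1}^{n} S_k, \qquad S_k := \sum_{j=0}^{\lfloor c_k/2\rfloor}\frac{(c_k)_{2j}}{(2k)^j\,j!},
\]
and bounding the correction factor $\prod_k S_k$ from both sides. The lower bound is immediate and needs no hypothesis on $\sigma$: each $S_k$ contains the $j=0$ term, which equals $1$, and since the summation index is capped at $\lfloor c_k/2\rfloor$ every falling factorial $(c_k)_{2j}$ is a product of nonnegative integers, so each remaining term is nonnegative. Hence $S_k \geq 1$ for all $k$, giving $\mathsf{invol}(\sigma) \geq \B(\sigma)$.

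For the upper bound I would exploit the defining structure of $\mathcal{P}_\xi$ to control the product factor by factor. When $k > \xi$ the constraint $c_k \leq 1$ forces $\lfloor c_k/2\rfloor = 0$, so $S_k = 1$ and these factors drop out entirely; only the indices $k \leq \xi$ survive. For those I would use the crude bound $(c_k)_{2j} \leq c_k^{2j} \leq \xi^{2j}$ (valid because $c_k \leq \xi$ on $\mathcal{P}_\xi$) and then extend the truncated sum to infinity to recognize an exponential series:
\[
S_k \leq \sum_{j=0}^{\infty}\frac{\xi^{2j}}{(2k)^j\,j!} = \exp\!\left(\frac{\xi^2}{2k}\right).
\]
Taking the product over $1 \leq k \leq \xi$ collapses the exponents into a harmonic sum,
\[
\prod_{k=1}^{\xi} S_k \leq \exp\!\left(\frac{\xi^2}{2}\sum_{k=1}^{\xi}\frac{1}{k}\right) = \exp\!\left(\frac{\xi^2}{2}H_\xi\right),
\]
and the elementary estimate $H_\xi \leq \log\xi + 1$ upgrades this to $\xi^{\xi^2/2}e^{\xi^2/2}$.

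It then remains to absorb this into the advertised form $(c\xi^\xi)^\xi = c^\xi \xi^{\xi^2}$. Comparing logarithms, I need $\tfrac{\xi^2}{2}\log\xi + \tfrac{\xi^2}{2} \leq \xi\log c + \xi^2\log\xi$, i.e. $\tfrac12 \leq \tfrac{\log c}{\xi} + \tfrac12\log\xi$, which is already slack for $\xi \geq 3$ (since $\log 3 > 1$) and can be forced for the finitely many remaining small values by taking $c$ large. Crucially, the $c$ so produced depends on neither $\sigma$ nor $\xi$, exactly as demanded.

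The one step that genuinely matters is retaining the full denominator $(2k)^j$ rather than discarding the $k$ and bounding by $2^j$. Keeping $k$ is precisely what converts the product over $k \leq \xi$ into a harmonic sum and yields the exponent of order $\xi^2\log\xi$; replacing $(2k)^j$ by $2^j$ would instead produce $\exp(\xi^3/2)$, whose logarithm $\xi^3/2$ eventually dwarfs $\xi^2\log\xi$ and overshoots the target $(c\xi^\xi)^\xi$. Everything else is routine bookkeeping.
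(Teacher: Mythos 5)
Your proof is correct. The paper itself does not reprove this lemma --- it states that it is ``entirely deterministic and thus needs not be revisited,'' deferring to the original Burnette--Schmutz paper --- and your argument is exactly the natural one underlying the stated bound: the $j=0$ term gives $S_k \geq 1$ and hence the lower bound; the factors with $k > \xi$ are identically $1$ on $\mathcal{P}_{\xi}$; and bounding $(c_k)_{2j} \leq \xi^{2j}$, extending each truncated sum to $\exp\!\left(\xi^2/(2k)\right)$, and multiplying over $k \leq \xi$ yields the harmonic-sum bound $\exp\!\left(\tfrac{\xi^2}{2}H_\xi\right) \leq \xi^{\xi^2/2}e^{\xi^2/2}$, which is in fact slightly sharper than the advertised $(c\xi^{\xi})^{\xi}$ and absorbs into it exactly as you verify. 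Your closing observation is also accurate: discarding the $k$ in the denominator would give only $e^{\xi^3/2}$, whose exponent outgrows $\xi^2\log\xi$, so keeping the harmonic structure is genuinely necessary for the lemma (and for its application in the paper, where $\xi = \sqrt{\log n}$ forces the correction to be $o(\sigma_n)$).
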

Lemma \ref{deterministic} is entirely deterministic and thus needs not be revisited. The next lemma generalizes Lemma III in \cite{ET1}.

\begin{lemma}
\label{probabilistic}
If $\xi = \xi(n) \to \infty,$ then $\PP_n(\sigma \not\in \mathcal{P}_{\xi}) = O(\frac{1}{\xi}).$
\end{lemma}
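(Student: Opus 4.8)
The plan is to control $\PP_n(\sigma \notin \mathcal{P}_\xi)$ by a union bound over the two failure modes built into $\mathcal{P}_\xi$ and to estimate each mode through the descending factorial moments of the cycle counts under $\textup{ESF}(\theta).$ Writing $C_k = c_k(\sigma),$ a permutation leaves $\mathcal{P}_\xi$ exactly when some short length $k \le \xi$ occurs more than $\xi$ times or some long length $k > \xi$ occurs at least twice, so
\[
\PP_n(\sigma \notin \mathcal{P}_\xi) \le \sum_{k \le \xi} \PP_n(C_k \ge \xi + 1) + \sum_{k > \xi} \PP_n(C_k \ge 2).
\]
The engine for both sums is the Ewens descending factorial moment identity \cite{ABT1},
\[
\E_n\big[(C_k)_r\big] = \Big(\frac{\theta}{k}\Big)^{\!r} \frac{n!\,\theta^{(n - rk)}}{(n - rk)!\,\theta^{(n)}}\,\mathbbold{1}\{rk \le n\},
\]
whose ratio factor $R_{k,r} = \frac{n!\,\theta^{(n-rk)}}{(n-rk)!\,\theta^{(n)}}$ telescopes into $\prod_{i=1}^{rk}\frac{n-i+1}{n-i+\theta}.$ Hence $R_{k,r} \le 1$ whenever $\theta \ge 1,$ whereas for $\theta < 1$ a logarithm-and-sum gives $R_{k,r} \le (n/(n-rk))^{1-\theta}.$

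For the short-cycle sum I would apply Markov's inequality to the $(\xi+1)$-st factorial moment, $\PP_n(C_k \ge \xi + 1) \le \E_n[(C_k)_{\xi+1}]/(\xi+1)!.$ Since $\theta/k \le \theta,$ the bound $\theta^{\xi+1}/(\xi+1)!$ decays super-exponentially while $R_{k,\xi+1}$ is at worst a fixed power of $n$ (and the whole moment vanishes once $(\xi+1)k > n$), so every term is negligible uniformly and the $\xi$-fold sum is $o(1/\xi);$ the largest piece, at $k = 1,$ is already crushed. This mode is therefore harmless.

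The substantive term is the long-cycle sum. Using $\PP_n(C_k \ge 2) \le \tfrac12 \E_n[(C_k)_2] \le \frac{\theta^2}{2k^2} R_{k,2}$ and summing over $\xi < k \le n/2$ (the moment being zero otherwise), the case $\theta \ge 1$ is immediate, since $R_{k,2} \le 1$ yields $\frac{\theta^2}{2}\sum_{k>\xi} k^{-2} = O(1/\xi).$ The delicate case is $\theta < 1,$ where $R_{k,2} \le (n/(n-2k))^{1-\theta}$ blows up as $k \to n/2.$ Here I would split the range at $k = n/4$: on $\xi < k \le n/4$ the ratio is bounded by $2^{1-\theta}$ and the contribution is again $O(\sum_{k>\xi} k^{-2}) = O(1/\xi),$ while on $n/4 < k \le n/2$ the substitution $k = n/2 - m$ turns the sum into $O\big(n^{-(1+\theta)} \sum_{m \ge 1} m^{-(1-\theta)}\big)$ with $m$ ranging up to $O(n),$ which evaluates to $O(1/n) = O(1/\xi).$ Collecting the three pieces yields $\PP_n(\sigma \notin \mathcal{P}_\xi) = O(1/\xi),$ with the long-cycle term fixing the rate. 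The main obstacle, and the only genuinely new feature beyond the uniform Erd\H{o}s--Tur\'{a}n estimate, is precisely this control of the factorial-moment ratio near $k \approx n/2$ when $\theta < 1;$ an alternative that sidesteps it would run the Feller coupling, replacing each $C_k$ by an independent $\textup{Poisson}(\theta/k)$ variable for which both sums are transparent and then absorbing the coupling discrepancy, though the direct moment estimate above keeps the argument self-contained.
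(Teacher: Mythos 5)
Your proof takes a genuinely different route from the paper. The paper generalizes Erd\H{o}s--Tur\'{a}n's Lemma III analytically: it writes $\PP_n(\sigma\in\mathcal{P}_\xi)$ as $\frac{n!}{\theta^{(n)}}[z^n](1-z)^{-\theta}\Omega(z),$ where $\Omega(z)$ packages both defect types, and bounds the tail coefficients by a positive majorant $\Omega^*,$ giving $\PP_n(\sigma\notin\mathcal{P}_\xi)\le\exp\bigl(K_1\theta^{\lfloor\xi\rfloor+1}/(\lfloor\xi\rfloor+1)!+K_2/\xi\bigr)-1=O(1/\xi).$ You instead run a union bound over the same two failure modes and estimate each via the ESF factorial-moment identity. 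That identity is the standard one, your Markov steps are sound, and your treatment of the long-cycle sum --- in particular the $\theta<1$ blow-up of the ratio $R_{k,2}$ as $k\to n/2,$ which you correctly isolate as the genuinely new finite-$n$ difficulty --- is correct. The probabilistic version makes the source of the $O(1/\xi)$ rate (the tail $\sum_{k>\xi}k^{-2}$) transparent, at the price of having to control $R_{k,r}$ by hand, a quantity the paper's coefficient-wise majorization never has to exhibit explicitly.

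There is, however, one genuine gap, in the short-cycle sum when $\theta<1.$ You argue that each term is negligible because $\theta^{\xi+1}/(\xi+1)!$ decays super-exponentially while $R_{k,\xi+1}$ is at worst a fixed power of $n.$ That inference is invalid as stated: the lemma must hold for \emph{every} $\xi(n)\to\infty,$ however slow, and super-exponential decay in $\xi$ does not dominate a fixed positive power of $n$ unless $\xi\log\xi\gtrsim\log n.$ (For $\xi=\log\log n,$ say, one has $n^{1-\theta}\theta^{\xi+1}/(\xi+1)!\to\infty,$ so the bound you state is useless in that regime, even for the single term $k=1.$) The repair is exactly the regime split you already deploy for long cycles: if $\xi+1\le\sqrt{n/2},$ then $(\xi+1)k\le n/2$ for every $k\le\xi,$ whence $R_{k,\xi+1}\le 2^{1-\theta}$ and the short-cycle sum is $O(\theta^{\xi+1}/(\xi+1)!)=o(1/\xi);$ if instead $\xi+1>\sqrt{n/2},$ then $(\xi+1)!\ge\lceil\sqrt{n/2}\rceil!$ swamps both the worst-case ratio $R_{k,\xi+1}=O_\theta(n^{1-\theta})$ and the at most $n$ terms of the sum. (Note also that your bound $R_{k,r}\le(n/(n-rk))^{1-\theta}$ is vacuous at $rk=n$ and should there be read as $O_\theta(n^{1-\theta}).$) With this patch, your argument is complete and constitutes a valid, self-contained alternative to the paper's proof.
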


\begin{proof}
Due to (\ref{Ewensdistribution}) and the exponential formula for labeled combinatorial structures,
\begin{equation}
\label{typicalgf}
\PP_n(\sigma \in \mathcal{P}_{\xi}) = \frac{n!}{\theta^{(n)}}[z^n]\!\left[\left(\prod_{k = 1}^{\lfloor\xi\rfloor} \sum_{j = 1}^{\lfloor\xi\rfloor}\left(\frac{\theta z^k}{k}\right)^{j}\frac{1}{j!}\right)\!\cdot\!\left(\prod_{k = \lfloor\xi\rfloor + 1}^{\infty}\left(1 + \frac{\theta z^k}{k}\right)\right)\right].
\end{equation}
The product in (\ref{typicalgf}) can be written as $(1 - z)^{-\theta}\Omega(z),$ where
\begin{equation}
\Omega(z) = \left(\prod_{k = 1}^{\lfloor\xi\rfloor}\left(1 - e^{-\frac{\theta z^k}{k}}\sum_{j = \lfloor\xi\rfloor + 1}^{\infty} \left(\frac{\theta z^k}{k}\right)^{j}\frac{1}{j!}\right)\right)\!\cdot\!\left(\prod_{k = \lfloor\xi\rfloor + 1}^{\infty}\left(1 - e^{-\frac{\theta z^k}{k}}\sum_{j = 2}^{\infty} \left(\frac{\theta z^k}{k}\right)^{j}\frac{1}{j!}\right)\right).
\end{equation}
Equating the corresponding coefficients reveals that
\begin{equation}
\PP_n(\sigma \in \mathcal{P}_{\xi}) - 1 = \frac{n!}{\theta^{(n)}}\sum_{\ell = 1}^{n} \binom{\theta + n - \ell - 1}{n - \ell}[z^\ell]\Omega(z) \leq \sum_{\ell = 1}^{n} [z^{\ell}]\Omega(z).
\end{equation}
If we replace every instance of the term $-e^{-\frac{\theta z^k}{k}}$ by $e^{\frac{\theta z^k}{k}}$ in each factor of $\Omega(z),$ we obtain an entire function $\Omega^*(z)$ whose coefficients are all positive and majorize the absolute values of the corresponding coefficients of $\Omega(z).$ Hence
\begin{align}
\PP_n(\sigma \not\in \mathcal{P}_{\xi}) &= 1 - \PP_n(\sigma \in \mathcal{P}_{\xi})
\\ &\leq \sum_{\ell = 1}^{n} [z^{\ell}]\Omega^*(z)
\\ &< \Omega^*(1) - \Omega^*(0)
\\ &= \left(\prod_{k = 1}^{\lfloor\xi\rfloor}\left(1 + e^{\frac{\theta}{k}}\sum_{j = \lfloor\xi\rfloor + 1}^{\infty} \left(\frac{\theta}{k}\right)^{j}\frac{1}{j!}\right)\right)\!\cdot\!\left(\prod_{k = \lfloor\xi\rfloor + 1}^{\infty}\left(1 + e^{\frac{\theta}{k}}\sum_{j = 2}^{\infty} \left(\frac{\theta}{k}\right)^{j}\frac{1}{j!}\right)\right) - 1.
\end{align}
For sufficiently large $n,$ we have
\begin{align}
\prod_{k = 1}^{\lfloor\xi\rfloor}\left(1 + e^{\frac{\theta}{k}}\sum_{j = \lfloor\xi\rfloor + 1}^{\infty} \left(\frac{\theta}{k}\right)^{j}\frac{1}{j!}\right) &< \exp\!\left(\sum_{k = 1}^{\lfloor\xi\rfloor}e^{\frac{\theta}{k}}\sum_{j = \lfloor\xi\rfloor + 1}^{\infty} \left(\frac{\theta}{k}\right)^{j}\frac{1}{j!}\right)
\\ &< \exp\!\left(\frac{e^{2\theta}\theta^{\lfloor\xi\rfloor + 1}}{(\lfloor\xi\rfloor + 1)!}\sum_{k = 1}^{\lfloor\xi\rfloor} \frac{1}{k^{\lfloor\xi\rfloor + 1}}\right)
\\ &< \exp\!\left(\frac{K_1\theta^{\lfloor\xi\rfloor + 1}}{(\lfloor\xi\rfloor + 1)!}\right)
\end{align}
for some constant $K_1,$ and
\begin{align}
\prod_{k = \lfloor\xi\rfloor + 1}^{\infty}\left(1 + e^{\frac{\theta}{k}}\sum_{j = 2}^{\infty} \left(\frac{\theta}{k}\right)^{j}\frac{1}{j!}\right) &< \exp\!\left(\sum_{k = \lfloor\xi\rfloor}^{\infty}e^{\frac{\theta}{k}}\sum_{j = 2}^{\infty} \left(\frac{\theta}{k}\right)^{j}\frac{1}{j!}\right)
\\ &< \exp\!\left(\frac{e^{\theta + 2}\theta^2}{2}\sum_{k = \lfloor\xi\rfloor}^{\infty} \frac{1}{k^2}\right)
\\ &< \exp\!\left(\frac{K_2}{\xi}\right)
\end{align}
for some constant $K_2.$ Therefore, as $n \to \infty,$
\begin{equation}
\PP_n(\sigma \not\in \mathcal{P}_{\xi}) < \exp\!\left(\frac{K_1\theta^{\lfloor\xi\rfloor + 1}}{(\lfloor\xi\rfloor + 1)!} + \frac{K_2}{\xi}\right) - 1 = O\!\left(\frac{1}{\xi}\right).
\end{equation}
\end{proof}

We now cite the asymptotic lognormality of $\B$ for $\theta$-weighted permutations.

\begin{lemma} 
\label{ET}
\textup{(\cite{ABT2})}
\[\sup_{x \in \R}\left|\mathbb{P}_{n}\!\left(\frac{\log\B(\sigma) - \mu_{n}}{\sigma_n}\leq x\right) - \Phi(x)\right| = O\!\left(\frac{1}{\sqrt{\log n}}\right),\]
where
$\mu_n :=\sum\limits_{k=1}^{n} \frac{\theta\log k}{k} \sim  \frac{\theta}{2}\log^2\!n$,
$\sigma^{2}_n :=  \sum\limits_{k=1}^{n} \frac{\theta\log^{2}\!k}{k}\sim $
$\frac{\theta}{3}\log^{3}\!n$,  and $ \Phi(x) := \frac{1}{\sqrt{2\pi}} $
$\int_{-\infty}^{x}e^{-t^2/2}\,dt .$
\end{lemma}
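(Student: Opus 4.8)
The plan is to reduce the statement to a Berry--Esseen estimate for a sum of independent random variables by means of the Feller coupling, which is the natural vehicle for transferring distributional results from the independent Poisson model to $\textup{ESF}(\theta).$ Write $\log\B(\sigma) = \sum_{k=1}^{n} c_k(\sigma)\log k,$ and recall that the Feller coupling furnishes, on a common probability space, the cycle counts $C_k^{(n)}$ of a $\theta$-weighted $\sigma \in \mathfrak{S}_n$ together with mutually independent $Z_k \sim \textup{Poisson}(\theta/k),$ realized as the numbers of $k$-spacings in a finite, respectively infinite, sequence of independent Bernoulli variables. The comparison random variable is $W_n := \sum_{k=1}^{n} Z_k\log k,$ whose mean and variance are \emph{exactly} the stated centering and scaling, namely $\E W_n = \sum_{k=1}^n \frac{\theta}{k}\log k = \mu_n$ and $\textup{Var}(W_n) = \sum_{k=1}^n \frac{\theta}{k}\log^2 k = \sigma_n^2.$

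First I would establish the Gaussian approximation for the independent sum $W_n$ via the Lyapunov form of the Berry--Esseen theorem. Since $\E|Z_k - \theta/k|^3 = O(\theta/k)$ for large $k$ (the $j=1$ term dominates for small intensity), the third absolute central moments satisfy $\sum_{k=1}^n \log^3\!k\,\E|Z_k - \theta/k|^3 \asymp \sum_{k=1}^n \frac{\theta\log^3\!k}{k} \sim \frac{\theta}{4}\log^4\!n,$ so the Lyapunov ratio is $\asymp \log^4\!n/(\log^3\!n)^{3/2} = O(1/\sqrt{\log n}).$ This yields $\sup_{x}|\PP(\frac{W_n - \mu_n}{\sigma_n} \le x) - \Phi(x)| = O(1/\sqrt{\log n}),$ which is already the target rate for the independent model.

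Next I would transfer this to $\log\B$ through the discrepancy $D_n := \log\B(\sigma) - W_n = \sum_{k=1}^n (C_k^{(n)} - Z_k)\log k$ and a smoothing argument. The Feller coupling agrees with the infinite process except for boundary effects, so after telescoping only $O(1)$ discrepant cycles survive; the single dangerous contribution is a ``spanning'' cycle whose length is size-biased and can reach order $n,$ contributing up to $\log n.$ Since such a cycle is long only with probability $O(1/n),$ the standard coupling estimates yield $\E|D_n| = O(1),$ comfortably inside the $O(\sqrt{\log n})$ we shall need. Using that $\Phi$ is $(2\pi)^{-1/2}$-Lipschitz and that the distribution function of $W_n$ is within $O(1/\sqrt{\log n})$ of $\Phi,$ I would bound, for any $\delta > 0,$
\[\sup_x \left|\PP\!\left(\frac{\log\B - \mu_n}{\sigma_n} \le x\right) - \Phi(x)\right| \le \frac{\delta}{\sigma_n\sqrt{2\pi}} + 2\,O\!\left(\frac{1}{\sqrt{\log n}}\right) + \PP(|D_n| > \delta).\]
Choosing $\delta \asymp \log n$ makes $\delta/\sigma_n = O(1/\sqrt{\log n}),$ while Markov's inequality gives $\PP(|D_n| > \delta) = O(\E|D_n|/\log n) = o(1/\sqrt{\log n}),$ completing the estimate.

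The hard part will be the coupling step: one must show that weighting the Feller discrepancies by the slowly growing factor $\log k$ does not inflate $\E|D_n|$ beyond $O(\sqrt{\log n}) = o(\sigma_n/\sqrt{\log n}).$ Concretely, the delicacy is controlling the lone long spanning cycle at the boundary---it carries a $\log n$--sized contribution to $\log\B$ but only on an event of probability $O(1/n)$---and verifying that this rare event, together with the $O(1)$ typical discrepancy, keeps $\E|D_n|$ of the requisite order so that the Markov bound in the smoothing step stays within the claimed rate. Everything else is the routine moment bookkeeping already visible in (\ref{var}).
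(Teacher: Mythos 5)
Your overall architecture---couple $\textup{ESF}(\theta)$ to the independent Poisson process via the Feller coupling, prove a Berry--Esseen bound for the independent sum, then transfer by a smoothing lemma---is the right family of ideas: it is the strategy of the source the paper cites for this lemma (Barbour--Tavar\'{e} \cite{BT}), and it mirrors the paper's own use of Lemma \ref{closeone} in proving Theorem \ref{errorrate}. (Note the paper never proves Lemma \ref{ET}; it imports it from the Arratia--Barbour--Tavar\'{e} literature, so your attempt can only be compared against that.) Your first step is correct and complete: $W_n = \sum_{k\le n} Z_k\log k$ has mean exactly $\mu_n$ and variance exactly $\sigma_n^2$, the third absolute central moment of $Z_k$ is $O(\theta/k)$ uniformly in $k$, and the Lyapunov ratio is $O(\log^4\!n/\log^{9/2}\!n) = O(1/\sqrt{\log n})$.

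The coupling step, however, contains a genuine error, located precisely where you flag ``the hard part.'' The claim $\E|D_n| = O(1)$, justified by the assertion that the spanning cycle ``is long only with probability $O(1/n)$,'' is false: in the Feller coupling the final cycle length $L_n$ is of order $n$ with probability bounded away from zero, not $O(1/n)$. For $\theta = 1$ one has $\PP(L_n > n/2) \to 1/2$ and, independently, $\PP(R_n > n) \to 1/2$. On the event $\{L_n > n/2\}\cap\{R_n > n\}\cap\{Y_n = 0\}$, where $Y_n = \sum_{k \le n} Z_{k,n}$, which retains probability bounded away from zero, the infinite process's spacing matching $\sigma$'s final cycle has length $L_n + R_n - 1 > n$ and is therefore absent from $W_n$ altogether; the spacing decomposition underlying Lemma \ref{trivial} then gives $D_n = \log L_n \ge \log(n/2)$ exactly. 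Hence $\E|D_n| \asymp \log n$, consistent with the paper's own coupling estimates (\ref{Ybound})--(\ref{distance2}), which are $O(\log n)$ and never $O(1)$. This breaks your smoothing step: the Lipschitz term forces $\delta = O(\log n)$, and then Markov yields only $\PP(|D_n| > \delta) \le \E|D_n|/\delta = O(1)$, which does not decay. Even upgrading Markov to a tail bound on the number of boundary discrepancies---since $|D_n| \le (Y_n + 2)\log n$, one has $\PP(|D_n| > C\log n) \le \PP(Y_n > C-2)$---requires $C \asymp \log\log n$ to drive the probability down to $O(1/\sqrt{\log n})$, and that delivers only the weaker rate $O(\log\log n/\sqrt{\log n})$ (the rate of Theorem \ref{functional}), not the claimed $O(1/\sqrt{\log n})$. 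Controlling this $\Theta(\log n)$-sized, constant-probability boundary discrepancy without losing the $\log\log n$ factor is the actual substance of \cite{BT}; it cannot be dispatched by first-moment bookkeeping plus Markov's inequality.
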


\cite{ET4} were the first to confirm the asymptotic lognormality of $\B$ for uniform random permutations. Progressively sharper versions of the Erd\H{o}s-Tur\'{a}n limit law for $\B$ have emerged on multiple occasions, e.g. \cite{Bovey}, \cite{DP}, \cite{Manstavicius}, \cite{Nicolas}, \cite{Zacharovas}. Proofs of Lemma \ref{ET} itself appear in the aggregate works of Arratia, Barbour, and Tavar\'{e} throughout \cite{ABT1}, \cite{ABT2}, \cite{ATdp}, \cite{BT}.

\begin{theorem}
\label{mainthm}
For all real $x,$
\[\lim_{n \to \infty} \mathbb{P}_{n}\!\left(\frac{\log\mathsf{invol}(\sigma) - \mu_{n}}{\sigma_n}\leq x\right) = \Phi(x)\]
\end{theorem}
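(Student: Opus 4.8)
The plan is to transfer the asymptotic lognormality of $\B$ furnished by Lemma \ref{ET} onto $\mathsf{invol}$ by exploiting the fact that the two statistics differ negligibly on the overwhelming majority of permutations. Write
\[W_n(\sigma) = \frac{\log\mathsf{invol}(\sigma) - \mu_n}{\sigma_n}, \qquad V_n(\sigma) = \frac{\log\B(\sigma) - \mu_n}{\sigma_n},\]
so that Lemma \ref{ET} asserts that $V_n$ converges in distribution to a standard normal (indeed uniformly, at rate $O(1/\sqrt{\log n})$). Since $\mathsf{invol}(\sigma) \geq \B(\sigma)$ always, and since Lemma \ref{deterministic} bounds the difference $\log\mathsf{invol}(\sigma) - \log\B(\sigma)$ from above by $\log\bigl((c\xi^{\xi})^{\xi}\bigr) = O(\xi^2\log\xi)$ on $\mathcal{P}_{\xi}$, we obtain, uniformly over $\sigma \in \mathcal{P}_{\xi}$,
\[0 \leq W_n(\sigma) - V_n(\sigma) \leq \frac{O(\xi^2\log\xi)}{\sigma_n} =: \epsilon_n.\]

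Everything now rests on choosing the threshold $\xi = \xi(n)$ so that two competing demands are met simultaneously. On one hand, Lemma \ref{probabilistic} requires only $\xi \to \infty$ to guarantee $\PP_n(\sigma \notin \mathcal{P}_{\xi}) = O(1/\xi) \to 0$. On the other hand, since $\sigma_n \sim \sqrt{\theta/3}\,\log^{3/2}n$, the deterministic gap forces the constraint $\xi^2\log\xi = o(\log^{3/2}n)$ in order to achieve $\epsilon_n \to 0$. These leave a comfortable window; for instance $\xi = \log^{3/4}n / \log\log n$ satisfies both, yielding $\epsilon_n = O(1/\log\log n) \to 0$ together with $\PP_n(\sigma \notin \mathcal{P}_{\xi}) = O(\log\log n/\log^{3/4}n) \to 0$.

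With such a $\xi$ fixed, I would sandwich the distribution function of $W_n$ between two shifted copies of that of $V_n$. Because the estimate above gives $\{W_n \leq x\} \cap \mathcal{P}_{\xi} \subseteq \{V_n \leq x\}$ and $\{V_n \leq x - \epsilon_n\} \cap \mathcal{P}_{\xi} \subseteq \{W_n \leq x\}$, we arrive at
\[\PP_n(V_n \leq x - \epsilon_n) - \PP_n(\sigma \notin \mathcal{P}_{\xi}) \leq \PP_n(W_n \leq x) \leq \PP_n(V_n \leq x) + \PP_n(\sigma \notin \mathcal{P}_{\xi}).\]
Letting $n \to \infty$, the probability of the bad set vanishes, while $\PP_n(V_n \leq x) \to \Phi(x)$ and $\PP_n(V_n \leq x - \epsilon_n) \to \Phi(x)$, the latter because $\epsilon_n \to 0$ and $\Phi$ is uniformly continuous. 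Both bounds therefore converge to $\Phi(x)$, and the squeeze delivers $\PP_n(W_n \leq x) \to \Phi(x)$.

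The only genuine difficulty is the calibration of $\xi$ described above: Lemma \ref{deterministic} costs $O(\xi^2\log\xi)$ while the relevant scale is $\sigma_n = \Theta(\log^{3/2}n)$, so the requirements $\xi \to \infty$ and $\xi^2\log\xi = o(\log^{3/2}n)$ must be reconciled. Everything else is the routine coupling argument that mirrors Theorem 4.3 of \cite{BS}. I note that this scheme yields only convergence in distribution; extracting the Berry--Esseen-type rate advertised in the introduction would require weighing the shift $\epsilon_n$ against the $O(1/\sqrt{\log n})$ rate in Lemma \ref{ET}, which I defer to the next section.
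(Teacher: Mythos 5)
Your proposal is correct and follows essentially the same route as the paper: both transfer Lemma \ref{ET} to $\mathsf{invol}$ via the sandwich $\B(\sigma) \leq \mathsf{invol}(\sigma) \leq \B(\sigma)(c\xi^{\xi})^{\xi}$ on $\mathcal{P}_{\xi}$ (Lemma \ref{deterministic}), the bound $\PP_n(\sigma \notin \mathcal{P}_{\xi}) = O(1/\xi)$ (Lemma \ref{probabilistic}), and the (uniform) continuity of $\Phi$ to absorb the resulting shift. The only differences are cosmetic: the paper fixes $\xi = \sqrt{\log n}$ and runs a fixed $\epsilon$--$\delta$ argument, while you take $\xi = \log^{3/4}n/\log\log n$ and a vanishing shift $\epsilon_n$; both choices satisfy the same two constraints $\xi \to \infty$ and $\xi^{2}\log\xi = o(\sigma_n)$.
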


\begin{proof}
Because $\mathsf{invol}(\sigma) \geq \B(\sigma)$ for all $\sigma \in \mathfrak{S}_n,$ 
an upper bound is immediate:
\begin{equation}
\mathbb{P}_{n}\!\left(\frac{\log\mathsf{invol}(\sigma) - \mu_{n}}{\sigma_n}\leq x\right) \leq 
\mathbb{P}_{n}\!\left(\frac{\log\B(\sigma) - \mu_{n}}{\sigma_n}\leq x\right) = \Phi(x) + o(1).
\end{equation}
For a lower bound, we capitalize on the uniform continuity of $\Phi.$ We also invoke the conditional bound
from Lemma \ref{deterministic}, where $\xi$ will now be chosen to our liking.
Due to Lemma \ref{probabilistic}, this bound holds with probability 
$1 - O(\frac{1}{\xi}).$

The remainder of the proof is virtually identical to that of Theorem 4.3 in \cite{BS}, which is included here for the sake of completeness. Let $\epsilon >0$ be a fixed but arbitrarily small positive number. 
We can choose $\delta>0$ so that $|\Phi(x) - \Phi(a)| < \epsilon$ 
whenever $|x - a| < \delta.$
If we choose $\xi=\sqrt{\log n},$  then we have
$ \log\!\left( (c\xi^{\xi} )^{\xi}\right)=o(\sigma_{n}).$
Therefore we can choose  $N_{\epsilon}$ so that, for all $n\geq N_{\epsilon}$, we have
$  \log\!\left( (c\xi^{\xi} )^{\xi}\right)< \frac{\delta \sigma_{n}}{2}.$
It follows that, uniformly in $x,$
\begin{align}
\mathbb{P}_{n}\!\left(\frac{\log\mathsf{invol}(\sigma) - \mu_{n}}{\sigma_n}\leq x\right) &\geq 
\mathbb{P}_n\!\left(\frac{\log\B(\sigma) +  \log\!\left( (c\xi^{\xi} )^{\xi}\right) - \mu_n}{\sigma_n}
\leq x\right) + o(1)
\\ &\geq \mathbb{P}_n\!\left(\frac{\log\B(\sigma) +  \delta\sigma_n/2 - \mu_n}{\sigma_n}\leq x\right) + o(1)
\\ &= \mathbb{P}_n\!\left(\frac{\log\B(\sigma) - \mu_n}{\sigma_n}\leq x - \frac{\delta}{2}\right) + o(1)
\\ &= \Phi\!\left(x - \frac{\delta}{2}\right) + o(1) > \Phi(x) - \epsilon + o(1).
\end{align}
Yet $\epsilon > 0$ was arbitrary, and so $\mathbb{P}_{n}\!\left(\frac{\log\mathsf{invol}(\sigma) - \mu_{n}}{\sigma_n}\leq x\right) \geq \Phi(x) + o(1).$ 
\end{proof}

\section{A Functional Refinement and Convergence Rate}
\label{sec:refine}
A stochastic process variant of Theorem \ref{mainthm} can also be formulated. Let $C^{(n)} = (c^{(n)}_1, c^{(n)}_2, \ldots)$ be distributed according to $\text{ESF}(\theta)$ and $W_n$ be the random element of the Skorokhod space $\mathcal{D}[0, 1]$ of c\`{a}dl\`{a}g functions on $[0, 1]$ defined by
\begin{equation}
W_n(t) = \frac{\log\mathsf{invol}_{\lfloor n^t \rfloor}(C^{(n)}) - \frac{\theta t^2}{2}\log^2\!n}{\sqrt{\frac{\theta}{3}\log^3\!n}}.
\end{equation}
\begin{theorem}
\label{functional}
It is possible to construct $C^{(n)}$ and a standard Brownian motion $W$ on the same probability space in such a way that
\[\E\!\left\{\sup_{0 \leq t \leq 1} \left|W_n(t) - W(t^3)\right|\right\} = O\!\left(\frac{\log\log n}{\sqrt{\log n}}\right).\]
\end{theorem}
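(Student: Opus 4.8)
The plan is to estimate $W_n(t)-W(t^3)$ by a three-fold triangle inequality, isolating the one genuinely new ingredient (the passage from $\mathsf{invol}$ to $\B$) from the strong-approximation machinery already available for the cycle-length process. Using a single Feller coupling, realize the Ewens counts $C^{(n)}$ together with independent $Z_k\sim\mathrm{Poisson}(\theta/k)$ on one probability space, and introduce the intermediate processes
\[
\widetilde W_n(t)=\frac{\log\B_{\lfloor n^t\rfloor}(C^{(n)})-\frac{\theta t^2}{2}\log^2\!n}{\sqrt{\frac{\theta}{3}\log^3\!n}},\qquad
\widehat W_n(t)=\frac{\sum_{k\le \lfloor n^t\rfloor}Z_k\log k-\frac{\theta t^2}{2}\log^2\!n}{\sqrt{\frac{\theta}{3}\log^3\!n}}.
\]
Then $\bigl|W_n(t)-W(t^3)\bigr|\le \bigl|W_n(t)-\widetilde W_n(t)\bigr|+\bigl|\widetilde W_n(t)-\widehat W_n(t)\bigr|+\bigl|\widehat W_n(t)-W(t^3)\bigr|$, and I would bound $\E\{\sup_{0\le t\le1}(\cdots)\}$ for each term.

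For the first term the common centering $\frac{\theta t^2}{2}\log^2\!n$ cancels, so $W_n(t)-\widetilde W_n(t)$ equals $\bigl(\log\mathsf{invol}_{\lfloor n^t\rfloor}(C^{(n)})-\log\B_{\lfloor n^t\rfloor}(C^{(n)})\bigr)/\sqrt{\tfrac{\theta}{3}\log^3\!n}$, a nonnegative quantity that is nondecreasing in $\lfloor n^t\rfloor$ and hence in $t$; its supremum is attained at $t=1$. Exactly as in (\ref{meandifference}), $\log\!\bigl(\sum_j\tfrac{(c_k^{(n)})_{2j}}{(2k)^j j!}\bigr)\le\frac{1}{2k}(c_k^{(n)})^2$, so the numerator has expectation at most $\sum_{k\le n}\frac{1}{2k}\E_n(c_k^{(n)})^2=O(1)$, the Ewens moments obeying the same bounds as their Poisson limits \cite{ABT1}. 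Dividing by $\sqrt{\tfrac{\theta}{3}\log^3\!n}$ yields a contribution of order $\log^{-3/2}\!n=o(\log\log n/\sqrt{\log n})$, which is negligible.

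The remaining two terms constitute the functional strong approximation of the centered sum-of-cycle-lengths process, supplied by the Feller coupling together with a Kom\'los--Major--Tusn\'ady--type inequality (e.g. Sakhanenko's for independent summands), in the form established for $\theta$-weighted permutations in \cite{BT}, \cite{ABT1}, \cite{ABT2}. For the middle term the Feller coupling expresses both $c_k^{(n)}$ and $Z_k$ through the same Bernoulli spacings, so that $\sum_{k\le \lfloor n^t\rfloor}(c_k^{(n)}-Z_k)\log k$ is controlled by the discrepancy localized near position $n$; the attendant maximal inequality gives $\E\{\sup_t|\cdots|\}=O(\log n\,\log\log n)$, contributing $O(\log\log n/\sqrt{\log n})$ after normalization. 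For the last term, $\sum_{k\le m}(Z_k\log k-\theta\log k/k)$ is a sum of independent centered variables with variance $V_m=\sum_{k\le m}\frac{\theta\log^2\!k}{k}\sim\frac{\theta}{3}\log^3\!m$, so the strong approximation furnishes a Brownian motion $\widetilde W$ with $\sup_m\bigl|\sum_{k\le m}(Z_k\log k-\theta\log k/k)-\widetilde W(V_m)\bigr|$ of the required size. Since $\mu_{\lfloor n^t\rfloor}-\frac{\theta t^2}{2}\log^2\!n=O(\log n)$ and $V_{\lfloor n^t\rfloor}/(\tfrac{\theta}{3}\log^3\!n)=t^3+O(1/\log n)$ uniformly on $[0,1]$, Brownian scaling $W(\cdot)=\widetilde W\!\bigl(\tfrac{\theta}{3}\log^3\!n\,\cdot\bigr)/\sqrt{\tfrac{\theta}{3}\log^3\!n}$ reduces the clock to $t\mapsto t^3$; the L\'evy modulus of $\widetilde W$ over the $O(\log^2\!n)$-length time shift contributes only $O(\sqrt{\log\log n/\log n})$, which is subleading, and the step-function jumps of $W_n$ (largest of order $1/\sqrt{\log n}$) are absorbed in the same estimate.

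The main obstacle is handling the middle and last terms jointly with the \emph{sharp} rate uniformly in $t$: one must simultaneously bound the Feller-coupling discrepancy of the $\log$-weighted count process and the Gaussian strong approximation of the independent Poisson sum over the entire range of partial sums. It is precisely the passage to the supremum that forces the factor $\log\log n$—arising from the maximal fluctuations of the partial sums and provably irremovable—so that the resulting rate matches the optimal Erd\H{o}s--Tur\'{a}n rate for the order of a random permutation obtained in \cite{BT}. By contrast, the reduction from $\mathsf{invol}$ to $\B$ is elementary and strictly of lower order, which is exactly why the $\log\mathsf{invol}$ process inherits verbatim the convergence rate known for $\log\B$.
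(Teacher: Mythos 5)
Your proposal is correct and rests on the same pillars as the paper's proof---the Feller coupling, a triangle inequality through two intermediate processes, an elementary $\mathsf{invol}$-to-$\B$ reduction, and the Barbour--Tavar\'{e} strong approximation \cite{BT} (Theorem \ref{brownian}) as the engine that supplies the $O(\log\log n/\sqrt{\log n})$ rate---but your decomposition runs in the opposite order, and that difference is substantive. The paper goes $\log\mathsf{invol}(C^{(n)})\to\log\mathsf{invol}(Z_0)\to\log\B(Z_0)$: it first swaps Ewens counts for their Poisson limits \emph{at the level of} $\mathsf{invol}$, which forces it to prove Lemma \ref{stochastic}, a multiplicative perturbation bound quantifying how $\mathsf{invol}_j$ changes when the count vector is enlarged, and only then removes the Hermite-type correction factors using the exactly computable Poisson moments (as in (1.5)--(1.6)). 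You instead go $\log\mathsf{invol}(C^{(n)})\to\log\B(C^{(n)})\to\log\B(Z)$: the coupling discrepancy is applied only to the \emph{linear} statistic $\log\B$, where Lemma \ref{trivial} alone gives $\sup_t|\sum_{k\le\lfloor n^t\rfloor}(c^{(n)}_k-Z_{k,0})\log k|\le(\log n)(\sum_k Z_{k,n}+2)$, hence $O(\log n)$ in expectation by $\E\sum_k Z_{k,n}\le\theta^2$ (your stated $O(\log n\log\log n)$ is conservative but sufficient); no analogue of Lemma \ref{stochastic} is needed. The price is that your first step requires second-moment bounds for $c^{(n)}_k$ under $\textup{ESF}(\theta)$ rather than under the Poisson law---these are standard (and your weighted sum $\sum_k\frac{1}{2k}\E_n(c^{(n)}_k)^2=O(1)$ does hold uniformly, though for $\theta<1$ the individual moments near $k=n$ carry an extra factor $(n/(n-k+1))^{1-\theta}$, so this deserves a line of verification rather than a pointer to the Poisson case)---and your observation that $\log\mathsf{invol}_j-\log\B_j$ is nonnegative and nondecreasing in $j$, so the supremum over $t$ sits at $t=1$, cleanly handles the functional aspect of that step. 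Net effect: your route trades the paper's perturbation lemma for an Ewens moment computation, which is arguably a simplification; both routes then lean identically on \cite{BT} for the genuinely hard Gaussian approximation, which you are entitled to cite rather than re-derive via KMT/Sakhanenko as you sketch.
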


The Feller coupling, which expresses the Ewens Sampling Formula in terms of
the spacings between successes in a sequence of independent Bernoulli trials,
offers the ideal terrain for this task. \cite{ABTsurvey}
wrote a brief expository note about the Feller coupling to act as a companion to
\cite{Crane}. To summarize, consider a sequence $\beta = (\beta_1, \beta_2, \ldots)$
of independent Bernoulli random variables in which $\text{Pr}(\beta_j = 1) = \theta/(\theta + j - 1),$ $j = 1, 2, \ldots$
Say that a $k$-spacing occurs in a given binary sequence $\beta$, starting at position $\ell - k$
and ending at position $\ell,$ if $\beta_{\ell - k} = \beta_{\ell} = 1$ and
$\beta_j = 0$ for $\ell - k + 1 \leq j \leq \ell-1.$ If for all positive integers $k$ and $n$ we define
\begin{equation}
c^{(n)}_k = \# k\text{-spacings in}\ 1\beta_2\beta_3\cdots\beta_n1,
\end{equation}
it can be shown that the distribution of $C^{(n)} = (c^{(n)}_1, c^{(n)}_2, \ldots)$ is exactly the same as it is in $\text{ESF}(\theta).$ If we further define $Z_m = (Z_{1, m}, Z_{2, m}, \ldots),$ where
\begin{equation}
Z_{k, m} = \# k\text{-spacings in}\ \beta_{m + 1}\beta_{m + 2}\cdots, \hspace{2.5em} k, m \geq 1,
\end{equation}
then $Z_m \overset{\text{a.s.}}{\longrightarrow} 0$ as $m \to \infty$ due to the Borel-Cantelli lemma, and so $Z_0$ is equivalent to the Poisson point process to which $C^{(n)}$ itself converges.

We now assemble the pieces needed to prove Theorem \ref{functional}. Our plan is to show that within the Feller coupling, $\log\mathsf{invol}_n(C^{(n)})$ is ``close enough'' to $\log\mathsf{invol}_n(Z_0),$ which in turn is ``close enough'' to $\log\B_n(Z_0),$ which is known to conform to a Wiener process. To this end, we raise two lemmas. The first is a trivial consequence of the preceding definitions.

\begin{lemma}
\label{trivial}
For all positive integers $k$ and $n,$
\[Z_{k,0} - Z_{k, n} - \mathbbold{1}\{L_n + R_n = k + 1\} \leq c^{(n)}_k \leq Z_{k,0} + \mathbbold{1}\{L_n = k\},\]
where $L_j = \min\{j \geq 1 : \beta_{n - j + 1} = 1\}$ and $R_j = \min\{j \geq 1 : \beta_{n + j} = 1\}.$
\end{lemma}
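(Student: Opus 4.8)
The statement to prove is Lemma \ref{trivial}, which relates the Ewens cycle counts $c^{(n)}_k$ (defined via $k$-spacings in $1\beta_2\cdots\beta_n1$) to the Poisson process counts $Z_{k,0}$ and $Z_{k,n}$ (defined via $k$-spacings in the full and truncated Bernoulli sequences). Let me sketch a proof.

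\medskip

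The plan is to argue purely combinatorially by comparing where $k$-spacings can occur in the three sequences. First I would fix the notation: $c^{(n)}_k$ counts $k$-spacings in the finite string $1\beta_2\beta_3\cdots\beta_n 1$ (with artificial $1$'s appended at positions $1$ and $n+1$), while $Z_{k,0}$ counts $k$-spacings in the infinite sequence $\beta_1\beta_2\beta_3\cdots$ and $Z_{k,n}$ counts them in the tail $\beta_{n+1}\beta_{n+2}\cdots$. The key observation is that a $k$-spacing is a pattern $1\,0^{k-1}\,1$ localized between two consecutive ones, so I can sort all $k$-spacings by \emph{where} the determining block of positions sits relative to the cutoff $n$.

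\medskip

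For the upper bound, I would show every $k$-spacing counted by $c^{(n)}_k$ corresponds to a $k$-spacing counted by $Z_{k,0}$, \emph{except} possibly for one boundary artifact caused by the forced terminal $1$ at position $n+1$. Concretely, the quantity $L_n = \min\{j \geq 1 : \beta_{n-j+1} = 1\}$ records the gap back to the last genuine one at or before position $n$; the appended terminal $1$ can manufacture an extra $k$-spacing precisely when $L_n = k$, i.e. when that last genuine one sits exactly $k$ positions before the terminal $1$. Every other spacing in the finite string is a genuine spacing of the infinite sequence (using the convention that position $1$ is a genuine one, consistent with $\beta_1 = 1$ since $\text{Pr}(\beta_1 = 1) = \theta/\theta = 1$), giving $c^{(n)}_k \leq Z_{k,0} + \mathbbold{1}\{L_n = k\}$.

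\medskip

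For the lower bound, I would account for the spacings of the infinite sequence that are \emph{lost} when we truncate at $n$ and append the terminal $1$. A $k$-spacing of the full sequence $Z_{k,0}$ that lies entirely in the tail $\beta_{n+1}\cdots$ is recaptured by $Z_{k,n}$, and the only spacing that can straddle the cutoff — one whose left one is at or before $n$ and whose right one is past $n$ — is governed by $L_n + R_n$, where $R_n = \min\{j \geq 1 : \beta_{n+j} = 1\}$ locates the first genuine one after position $n$. Such a straddling spacing has length $L_n + R_n - 1$, and it registers as a $k$-spacing exactly when $L_n + R_n = k + 1$; this is the single spacing removed by truncation that is neither in $Z_{k,n}$ nor reinstated by the terminal $1$. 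Collecting these yields $Z_{k,0} - Z_{k,n} - \mathbbold{1}\{L_n + R_n = k+1\} \leq c^{(n)}_k$. The main (and only real) obstacle is the careful bookkeeping at the two boundaries — verifying that the terminal $1$ contributes at most the one indicator $\mathbbold{1}\{L_n = k\}$ and that exactly one straddling spacing can be lost — after which the inequalities follow directly from the definitions, justifying the lemma's description as ``a trivial consequence of the preceding definitions.''
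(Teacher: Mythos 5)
Your proof is correct, and it is exactly the direct bookkeeping the paper has in mind: the paper offers no written proof at all, dismissing the lemma as ``a trivial consequence of the preceding definitions,'' and your case analysis (interior spacings match genuine spacings since $\beta_1 = 1$ a.s.; the appended terminal $1$ creates at most one extra spacing, precisely when $L_n = k$; truncation destroys at most one straddling spacing, precisely when $L_n + R_n = k+1$, with all tail spacings recovered by $Z_{k,n}$) is the intended argument, correctly filled in.
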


\begin{lemma}
\label{stochastic}
If $a$ and $b$ are two vectors such that $0 \leq a_k \leq b_k$ for $k \in [n],$ then
\[\mathsf{invol}_{j}(b) \leq \mathsf{invol}_j(a)n^{\|b - a\|_1}\prod_{k = 1}^{n}\exp\!\left(\frac{b_k^2}{2k}\right),\]
for all $j \in [n],$ where $\|b-a\|_1$ denotes the usual $1$-norm.
\end{lemma}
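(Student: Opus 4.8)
The plan is to split $\mathsf{invol}_j$ into its $\B$-factor and its ``Hermite part'' and to control the two independently. Writing $P_k(m) = \sum_{i=0}^{\lfloor m/2\rfloor} \frac{(m)_{2i}}{(2k)^i\, i!}$, one has $\mathsf{invol}_j(\alpha) = \B_j(\alpha)\prod_{k=1}^{j} P_k(\alpha_k)$ for every $j$, where the entries $a_k,b_k$ are nonnegative integers (as they are in the Feller coupling application). Because the falling factorial $(m)_{2i}$ is a product of $2i$ positive factors whenever $2i \le m$, each $P_k(m)$ is a sum of nonnegative terms whose $i=0$ contribution equals $1$; hence $P_k(m) \ge 1$ for all $m \ge 0$. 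This positivity is what will let me reinsert the $a$-factors at the end.

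The key estimate is $P_k(m) \le \exp\!\bigl(m^2/(2k)\bigr)$, and essentially all of the content lies here. Comparing $P_k(m)$ termwise against the Taylor expansion $\exp\!\bigl(m^2/(2k)\bigr) = \sum_{i \ge 0} \frac{m^{2i}}{2^i\, i!\, k^i}$, the $i$-th summand of $P_k(m)$ is $\frac{(m)_{2i}}{2^i i! k^i}$, and $(m)_{2i} = m(m-1)\cdots(m-2i+1) \le m^{2i}$ since each of the $2i$ factors lies in $[1,m]$. As $P_k(m)$ is a finite truncation of a series dominated termwise by that of the exponential, the bound follows; applied at $m=b_k$ it gives $P_k(b_k) \le \exp\!\bigl(b_k^2/(2k)\bigr)$. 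This is the same bound already implicit in \eqref{meandifference}.

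For the $\B$-factor I would use $0 \le a_k \le b_k$ together with $k \le n$. Since the exponents $b_k-a_k$ are nonnegative, $\B_j(b)/\B_j(a) = \prod_{k=1}^{j} k^{\,b_k-a_k} \le \prod_{k=1}^{j} n^{\,b_k-a_k} = n^{\sum_{k\le j}(b_k-a_k)}$, and because $j \le n$ the exponent is at most $\sum_{k\le n}(b_k-a_k) = \|b-a\|_1$. Thus $\B_j(b) \le \B_j(a)\,n^{\|b-a\|_1}$.

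Finally I assemble the pieces. Using $P_k(a_k) \ge 1$ from the first step, $\prod_{k=1}^{j} P_k(b_k) \le \prod_{k=1}^{j} \exp\!\bigl(b_k^2/(2k)\bigr) \le \prod_{k=1}^{j} P_k(a_k)\exp\!\bigl(b_k^2/(2k)\bigr)$, so that
\[
\mathsf{invol}_j(b) = \B_j(b)\prod_{k=1}^{j} P_k(b_k) \le \B_j(a)\,n^{\|b-a\|_1}\prod_{k=1}^{j} P_k(a_k)\exp\!\Bigl(\tfrac{b_k^2}{2k}\Bigr) = \mathsf{invol}_j(a)\,n^{\|b-a\|_1}\prod_{k=1}^{j}\exp\!\Bigl(\tfrac{b_k^2}{2k}\Bigr).
\]
Extending the last product from $j$ to $n$ only enlarges it, since every factor is $\ge 1$, and this yields the claimed inequality. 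No step presents a genuine obstacle: the only substantive observation is the elementary termwise domination $P_k(m) \le \exp\!\bigl(m^2/(2k)\bigr)$, with the trivial bound $P_k(m)\ge 1$ doing the bookkeeping that converts it into a clean ratio estimate.
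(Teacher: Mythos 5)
Your proposal is correct and follows essentially the same route as the paper's proof: factor $\mathsf{invol}_j$ into its $\B$-part and its Hermite part $V_m(k)=\sum_{\ell=0}^{\lfloor m/2\rfloor}\frac{(m)_{2\ell}}{(2k)^{\ell}\ell!}$, bound the ratio of $\B$-factors by $n^{\|b-a\|_1}$, and use $1 \leq V_m(k) \leq \exp\!\left(\frac{m^2}{2k}\right)$ to control the ratio of Hermite factors. The only difference is that you spell out the termwise-domination argument and the extension of the product from $j$ to $n$, details the paper leaves implicit.
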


\begin{proof}
Observe that
\begin{equation}
\mathsf{invol}_j(b) = \mathsf{invol}_j(a)\left(\prod_{k = 1}^{j} k^{b_k - a_k}\right)\cdot\left(\prod_{k = 1}^{j} \frac{V_{b_k}(k)}{V_{a_k}(k)}\right),
\end{equation}
where we have set
\[V_m(k) = \sum\limits_{\ell=0}^{\lfloor m/2\rfloor} \frac{(m)_{2\ell}}{(2k)^{\ell}\ell!}.\]
Note that $1 \leq V_m(k) \leq \exp(\frac{m^2}{2k})$ for all positive integers $m$ and $k.$
\end{proof}

\begin{lemma}
\[\E\!\left\{\sup_{0 \leq t \leq 1} \left|\log\mathsf{invol}_{\lfloor n^t \rfloor}(C^{(n)}) - \log\B_{\lfloor n^t \rfloor}(Z_0)\right|\right\} = O(\log n).\]
\end{lemma}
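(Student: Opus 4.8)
The plan is to bound the supremum by controlling two separate discrepancies: the gap between $\log\mathsf{invol}_{\lfloor n^t \rfloor}(C^{(n)})$ and its Poissonized analog $\log\mathsf{invol}_{\lfloor n^t \rfloor}(Z_0)$, which is driven by how much the Feller-coupled cycle counts $C^{(n)}$ differ from the limiting point process $Z_0$, and the gap between $\log\mathsf{invol}_{\lfloor n^t \rfloor}(Z_0)$ and $\log\B_{\lfloor n^t \rfloor}(Z_0)$, which is the same ``Hermite-polynomial correction factor'' already seen in (\ref{meandifference}). First I would use Lemma \ref{trivial} to sandwich $c^{(n)}_k$ between $Z_{k,0} - Z_{k,n} - \mathbbold{1}\{L_n + R_n = k+1\}$ and $Z_{k,0} + \mathbbold{1}\{L_n = k\}$, and then feed these bounds into Lemma \ref{stochastic}. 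Since $\mathsf{invol}_j$ is monotone in each coordinate (larger cycle counts only add nonnegative summands and raise the powers of $k$ in $\B$), applying Lemma \ref{stochastic} with $a = C^{(n)}$ and $b$ the upper envelope $Z_0 + \mathbbold{1}\{L_n = \cdot\}$ yields
\[
\bigl|\log\mathsf{invol}_{\lfloor n^t \rfloor}(C^{(n)}) - \log\mathsf{invol}_{\lfloor n^t \rfloor}(Z_0)\bigr| \leq \|C^{(n)} - Z_0\|_1 \log n + \sum_{k = 1}^{n} \frac{(Z_{k,0}+1)^2 - Z_{k,0}^2}{2k} + \text{(correction terms)},
\]
uniformly in $t \in [0,1]$, after absorbing the one-sided discrepancy $\mathbbold{1}\{L_n = k\}$, etc.

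The dominant contribution is the $\|C^{(n)} - Z_0\|_1 \log n$ term. From Lemma \ref{trivial}, the $1$-norm discrepancy $\|C^{(n)} - Z_0\|_1 = \sum_k |c^{(n)}_k - Z_{k,0}|$ is bounded by $\sum_k Z_{k,n} + L_n + R_n + O(1)$, and it is a standard fact of the Feller coupling (see \cite{ABTsurvey}, \cite{ABT2}) that $\E\sum_k Z_{k,n}$ is $O(1)$ while $\E L_n$ and $\E R_n$ are each $O(\log n)$ — indeed $L_n$ is essentially a geometric-type spacing whose expectation grows logarithmically in $n$. Hence $\E\|C^{(n)} - Z_0\|_1 = O(\log n)$, so this term contributes $O(\log^2 n)$ in expectation, which at first glance overshoots the claimed $O(\log n)$. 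The resolution is that the bad event — where a long spacing forces a large coordinate discrepancy — is concentrated on a single value of $k$ near $n$, where the $\log n$ weight in Lemma \ref{stochastic} is actually $\log\lfloor n^t\rfloor \le t\log n$; more importantly, the $1$-norm discrepancy at the \emph{top} index contributes to $\B$ through a factor $k^{|c^{(n)}_k - Z_{k,0}|}$ with $\log k = O(\log n)$ but multiplicity $O(1)$ in expectation. I would therefore split the sum over $k$ at an intermediate threshold and treat the tail (large $k$, where discrepancies are rare) separately from the bulk, using that $\E[\sum_k (\log k)|c^{(n)}_k - Z_{k,0}|] = O(\log n)$ rather than naively pulling the $\log n$ outside the $1$-norm.

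The second discrepancy, $\log\mathsf{invol}_{\lfloor n^t \rfloor}(Z_0) - \log\B_{\lfloor n^t \rfloor}(Z_0)$, is handled exactly as in (\ref{meandifference}): its expectation is $\sum_k \frac{1}{2k}\E Z_{k,0}^2 = O(1)$, uniformly in $t$, since the correction factor $V_{Z_{k,0}}(k)/1$ is squeezed between $1$ and $\exp(Z_{k,0}^2/(2k))$ by Lemma \ref{stochastic}'s observation. The main obstacle I expect is bookkeeping the supremum over $t$ rather than a fixed $t$: because $\lfloor n^t \rfloor$ ranges over all of $\{1, \ldots, n\}$ as $t$ sweeps $[0,1]$, the supremum is really a maximum over all truncation levels, and I must ensure every bound above is uniform in the truncation. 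Fortunately both Lemma \ref{stochastic} and the $V_m(k)$ estimate are stated for all $j \in [n]$ simultaneously and are monotone in $j$, so taking the supremum over $t$ only replaces each partial sum by the full sum over $k \le n$, which is precisely what the displayed bounds already control. Assembling the two pieces gives the stated $O(\log n)$ bound on the expected supremum.
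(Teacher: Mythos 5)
You have the right architecture, and it is in fact the paper's: a triangle inequality through $\log\mathsf{invol}_{\lfloor n^t\rfloor}(Z_0)$, with Lemma \ref{trivial} fed into Lemma \ref{stochastic} for the Feller-coupling gap, the computation (\ref{meandifference}) for the Hermite-correction gap, and uniformity in $t$ coming for free because every bound holds for all truncations $j \in [n]$ simultaneously. The genuine gap is in your quantification of the $\ell^1$ discrepancy. Lemma \ref{trivial} gives $|c^{(n)}_k - Z_{k,0}| \leq Z_{k,n} + \mathbbold{1}\{L_n + R_n = k+1\} + \mathbbold{1}\{L_n = k\}$, and upon summing over $k$ each indicator family contributes at most $1$ \emph{in total}, because $L_n$ (respectively $L_n + R_n$) equals exactly one value. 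Hence $\|C^{(n)} - Z_0\|_1 \leq Y_n + 2$ with $Y_n = \sum_k Z_{k,n}$, so $\E\|C^{(n)} - Z_0\|_1 = O(1)$ by the Barbour--Tavar\'{e} bound $\E Y_n \leq \theta^2$, and the naive estimate $\log n \cdot \E\|C^{(n)} - Z_0\|_1 = O(\log n)$ finishes the first gap with no further work; this is exactly what the paper does, obtaining $(Y_n + 1)\log n + 4D^*_n$ with $\E D^*_n = \sum_k \frac{1}{2k}(\theta^2/k^2 + 3\theta/k + 1) = O(\log n)$. You instead replaced the summed indicators by the values $L_n + R_n$ themselves, which is a misreading of Lemma \ref{trivial}, and then asserted $\E L_n, \E R_n = O(\log n)$, which is false: in the Feller coupling successes near position $j$ occur with probability about $\theta/j$, so the last success before position $n$ sits at distance $\Theta(n)$, giving $\E L_n = \Theta(n)$, and for $\theta \leq 1$ one even has $\E R_n = +\infty$. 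Taken literally, your bound is therefore infinite when $\theta \leq 1$, not merely the $O(\log^2 n)$ overshoot you worried about.

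Consequently, the ``resolution'' you propose --- splitting the sum over $k$ and invoking $\E[\sum_k (\log k)\,|c^{(n)}_k - Z_{k,0}|] = O(\log n)$ --- is a patch for a problem that does not exist, and as sketched it is circular: the only clean way to prove that weighted estimate is the very indicator observation you are missing (each indicator contributes $\log k \leq \log n$ exactly once, and $\sum_k (\log k) Z_{k,n} \leq Y_n \log n$), after which the unweighted bound already suffices and no splitting or threshold is needed. Once this step is repaired, the rest of your argument --- coordinatewise monotonicity of $\mathsf{invol}_j$, the correction terms bounded by $\exp$ of $D^*_n$-type sums, the $O(1)$ estimate for $\E\sup_t|\log\mathsf{invol}_{\lfloor n^t\rfloor}(Z_0) - \log\B_{\lfloor n^t\rfloor}(Z_0)|$, and the uniformity in $t$ --- is sound and coincides with the paper's proof.
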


\begin{proof}
Set $Y_n := \sum_{k = 1}^{n} Z_{k, n},$ $D_n := \sum_{k = 1}^{n} \frac{(c^{(n)}_k + Z_{k, n} + 1)^2}{2k},$ and $D^{*}_n := \sum_{k = 1}^{n} \frac{(Z_{k,0} + 1)^2}{2k}.$ It follows from Lemmas \ref{trivial} and \ref{stochastic} that
\begin{align}
\log\mathsf{invol}_j(Z_0) &\leq \log\mathsf{invol}_j(C^{(n)} + Z_n + (\mathbbold{1}\{L_n + R_n = 2\}, \ldots, \mathbbold{1}\{L_n + R_n = n + 1\}))
\\ &\leq \log\mathsf{invol}_j(C^{(n)}) + (Y_n + 1)\log n + D_n
\end{align}
and
\begin{align}
\log\mathsf{invol}_j(C^{(n)}) &\leq \log\mathsf{invol}_j(Z_0 + (\mathbbold{1}\{L_n = 1\}, \ldots, \mathbbold{1}\{L_n = n\}))
\\ &\leq \log\mathsf{invol}_j(Z_0) + \log n + D^{*}_n,
\end{align}
for all $j \in [n].$ By noticing that
\begin{equation}
D_n \leq \sum_{k = 1}^{n} \frac{(2Z_{k,0}+2)^2}{2k} = 4D^*_n,
\end{equation}
we see that
\begin{equation}
\label{Ybound}
|\log\mathsf{invol}_{\lfloor n^t \rfloor}(C^{(n)}) - \log\mathsf{invol}_{\lfloor n^t \rfloor}(Z_0)| \leq (Y_n + 1)\log n + 4D^{*}_n
\end{equation}
for $0 \leq t \leq 1.$ \cite{BT} proved that $\E Y_n \leq \theta^2.$ Additionally,
\begin{equation}
\label{distance1}
\E D^{*}_n = \sum_{k = 1}^{n} \frac{1}{2k}\!\left(\frac{\theta^2}{k^2} + \frac{3\theta}{k} + 1\right) = O(\log n).
\end{equation}
Hence, 
\begin{equation}
\label{distance2}
\E\!\left\{\sup_{0 \leq t \leq 1} \left|\log\mathsf{invol}_{\lfloor n^t \rfloor}(C^{(n)}) - \log\mathsf{invol}_{\lfloor n^t \rfloor}(Z_0)\right|\right\} = O(\log n).
\end{equation}
Due to (1.5) and (1.6),
\begin{equation}
\E\!\left\{\sup_{0 \leq t \leq 1} \left|\log\mathsf{invol}_{\lfloor n^t \rfloor}(Z_0) - \log\B_{\lfloor n^t \rfloor}(Z_0)\right|\right\} = O(1).
\end{equation}
Triangle inequality implies the desired result.
\end{proof}

The remainder of the proof of Theorem \ref{functional} is immediate in light of the following.

\begin{theorem}
\label{brownian}
\textup{(\cite{BT})}
Suppose for all $t \in [0, 1]$ we define
\[B_n(t) = \frac{\log\B_{\lfloor n^t \rfloor}(Z_0) - \frac{\theta t^2}{2}\log^2\!n}{\sqrt{\frac{\theta}{3}\log^3\!n}}.\]
Then $B_n$ converges weakly in $\mathcal{D}[0, 1]$ to a standard Brownian motion $B,$ in the sense that
\[\E\!\left\{\sup_{0 \leq t \leq 1} \left|B_n(t) - B(t^3)\right|\right\} = O\!\left(\frac{\log\log n}{\sqrt{\log n}}\right).\]
\end{theorem}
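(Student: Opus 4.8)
The plan is to reduce the statement to a strong-approximation (invariance-principle) result for partial sums of independent random variables, for which the Feller coupling is tailor-made. The defining property of $Z_0$ in the coupling is that its coordinates $Z_{k,0}$ are mutually independent with $Z_{k,0} \sim \text{Poisson}(\theta/k)$, so that
\[
\log\B_{\lfloor n^t \rfloor}(Z_0) = \sum_{k=1}^{\lfloor n^t \rfloor} Z_{k,0}\log k
\]
is a partial sum, in the discrete time $m = \lfloor n^t \rfloor$, of the independent summands $X_k := Z_{k,0}\log k$. A routine comparison with integrals gives $\sum_{k \leq m}\frac{\theta}{k}\log k = \frac{\theta}{2}\log^2 m + O(\log m)$ and, for the variance, $V_m := \sum_{k \leq m}\frac{\theta}{k}\log^2 k = \frac{\theta}{3}\log^3 m + O(\log^2 m)$. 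Since $\log\lfloor n^t \rfloor = t\log n + O(1)$ uniformly on $\{t : n^t \geq 1\}$, the centering $\frac{\theta t^2}{2}\log^2 n$ agrees with $\E\log\B_{\lfloor n^t \rfloor}(Z_0)$ up to $O(\log n)$, which contributes only $O(1/\sqrt{\log n})$ after division by $\sqrt{\frac{\theta}{3}\log^3 n}$. Thus it suffices to couple the centered, rescaled partial-sum process to Brownian motion.

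The heart of the matter is to construct, on a common probability space with $(Z_{k,0})$, a standard Brownian motion $W$ for which $S_m := \sum_{k \leq m}(X_k - \E X_k)$ is uniformly close to $W(V_m)$. I would obtain this from a strong approximation theorem for independent summands (a Skorokhod embedding or a Koml\'{o}s--Major--Tusn\'{a}dy--type coupling), run on the natural variance clock $m \mapsto V_m$. The crucial feature to track is that the summands are not uniformly bounded: when $k$ is comparable to $m$, a single occurrence $Z_{k,0} = 1$ produces a jump of size $\log k \asymp \log n$ in $\log\B$. These rare large jumps---arising from the longest cycles---are what prevent a continuous Gaussian process from matching $S_m$ to better than order $\log n$, and a law-of-the-iterated-logarithm estimate for their worst-case cumulative effect over the whole range $m \leq n$ promotes this to order $\log n \cdot \log\log n$. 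Dividing by the standard deviation $\sqrt{\frac{\theta}{3}\log^3 n} \asymp \log^{3/2} n$ then produces precisely the asserted rate $\log\log n/\sqrt{\log n}$.

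The final ingredient is the parabolic time change that turns $V_m$ into $t^3$. By Brownian scaling, $B(s) := W(\frac{\theta}{3}s\log^3 n)/\sqrt{\frac{\theta}{3}\log^3 n}$ is again a standard Brownian motion, and
\[
\frac{W(V_{\lfloor n^t \rfloor})}{\sqrt{\frac{\theta}{3}\log^3 n}} = B\!\left(\frac{V_{\lfloor n^t \rfloor}}{\frac{\theta}{3}\log^3 n}\right).
\]
From $V_m = \frac{\theta}{3}\log^3 m + O(\log^2 m)$ and $\log\lfloor n^t \rfloor = t\log n + O(1)$ one gets $V_{\lfloor n^t \rfloor}/(\frac{\theta}{3}\log^3 n) = t^3 + O(1/\log n)$ uniformly in $t$, so replacing the argument by $t^3$ costs at most the Brownian oscillation over a window of length $O(1/\log n)$, namely $O(\sqrt{\log\log n/\log n})$, which is absorbed by the rate already incurred. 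Assembling these approximations by the triangle inequality and taking $\E\sup_{0 \leq t \leq 1}$ yields the claim.

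I expect the main obstacle to be the strong-approximation step with its optimal constant in the rate: one must simultaneously control the Gaussian coupling of the bulk of the small summands and the genuinely non-Gaussian contribution of the sporadic large jumps, and it is exactly the sharp, law-of-the-iterated-logarithm handling of the latter that produces the $\log\log n$ factor and certifies that the rate cannot be improved. A subsidiary nuisance is maintaining uniformity in $t$ down to $t = 0$, where $\lfloor n^t \rfloor$ is a bounded integer, the accumulated variance is $o(1)$, and the discrete-to-continuous passage between $V_{\lfloor n^t \rfloor}$ and $\frac{\theta}{3}t^3\log^3 n$ is most delicate.
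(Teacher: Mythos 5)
This theorem is not proved in the paper at all: it is quoted verbatim from Barbour and Tavar\'{e} \cite{BT}, so your attempt has to be measured against their argument rather than anything in the text. Your framework is the right one and matches theirs in outline: $\log\B_{\lfloor n^t\rfloor}(Z_0)=\sum_{k\le \lfloor n^t\rfloor}Z_{k,0}\log k$ is a partial-sum process of independent Poisson summands; the discrepancy between $\E\log\B_{\lfloor n^t\rfloor}(Z_0)$ and the centering $\frac{\theta t^2}{2}\log^2\!n$ is $O(\log n)$ uniformly in $t$, hence $O(1/\sqrt{\log n})$ after normalization; and your endgame is sound, since $V_{\lfloor n^t\rfloor}/(\tfrac{\theta}{3}\log^3\!n)=t^3+O(1/\log n)$ uniformly and the Brownian oscillation over windows of length $O(1/\log n)$ is $O(\sqrt{\log\log n/\log n})$, which is absorbed by the claimed rate.

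The gap is exactly at the step you defer: the coupling of $S_m=\sum_{k\le m}(Z_{k,0}-\theta/k)\log k$ to $W(V_m)$ with error $O(\log n\,\log\log n)$ \emph{is} the theorem, and your proposal leaves it as an appeal to ``a Skorokhod embedding or a KMT-type coupling'' plus an iterated-logarithm heuristic. Two concrete problems. First, the Skorokhod option fails quantitatively: the embedding durations $\tau_k$ have variances of order $(\log^4\!k)/k$, so the random clock $T_m$ deviates from $V_m$ by $\asymp\log^{5/2}\!n$, producing a Brownian discrepancy $\asymp\log^{5/4}\!n$, which dominates $\log n\,\log\log n$; only a KMT/Sakhanenko-type dyadic coupling can work, and for unbounded, non-identically distributed summands its exponential-moment hypotheses (at scale $1/\log n$) would have to be verified, with tail bounds strong enough to control $\E\sup$, not merely an almost-sure rate. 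Second, the law-of-the-iterated-logarithm story for the rare big jumps is not the mechanism that produces the upper bound. The rate actually arises (and this is, in substance, how \cite{BT} proceed) from the Poisson structure: $\sum_{k\le m}Z_{k,0}$ is a unit-rate Poisson process $N$ evaluated at $\Lambda_m=\theta\sum_{k\le m}1/k\asymp\theta\log m$, and the KMT approximation for Poisson processes gives a Brownian motion $W$ with $\E\sup_{s\le\Lambda_n}|N(s)-s-W(s)|=O(\log\Lambda_n)=O(\log\log n)$; writing $\log\B_m(Z_0)=\int_0^{\Lambda_m}h\,dN$ with $h(s)=\log k$ on $(\Lambda_{k-1},\Lambda_k]$ and summing by parts multiplies that error by $\|h\|_\infty+\mathrm{TV}(h)\asymp\log n$, while the Gaussian part $\int_0^{\Lambda_m}h\,dW$ equals $B(V_m)$ exactly, by the Dambis--Dubins--Schwarz time change. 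That is where $\log n\cdot\log\log n$, hence $\log\log n/\sqrt{\log n}$ after normalization, comes from; without this argument (or a fully verified Sakhanenko-type application in its place), your proposal asserts the theorem's key estimate rather than proving it.
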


We now articulate the proximity of the quantities $\log\mathsf{invol}_n(C^{(n)})$ and $\log\B_n(C^{(n)})$ with the goal of appending a convergence rate to Theorem \ref{mainthm}. Let us introduce the two standardized random variables
\[S_{1,n} = \frac{\log\B_n(C^{(n)}) - \frac{\theta}{2}\log^2\!n}{\sqrt{\frac{\theta}{3}\log^3\!n}} \hspace{2.5em} \text{and} \hspace{2.5em} S_{2,n} =  \frac{\log\mathsf{invol}_n(C^{(n)}) - \frac{\theta}{2}\log^2\!n}{\sqrt{\frac{\theta}{3}\log^3\!n}}.\]
The following generalization of Lemma 2.5 in \cite{BT} will help us show that the distributions of $S_{1, n}$ and $S_{2, n}$ are almost the same.

\begin{lemma}
\label{closeone}
Let $U$ and $X$ be random variables. Suppose that
\[\sup_{x \in \R} |\PP(U \leq x) - G(x)| \leq \eta,\]
where $G(x)$ is a Lipschitz function over $\R.$ Then, for any $\epsilon > 0,$
\[\sup_{x \in \R} |\PP(U + X \leq x) - G(x)| \leq \eta + C_G\epsilon + \PP(|X| > \epsilon)\]
where the constant $C_G$ depends only on $\|G'\|_{\infty}.$
\end{lemma}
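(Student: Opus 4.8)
The plan is to establish the two-sided estimate $|\PP(U+X \leq x) - G(x)| \ll \eta + \epsilon + \PP(|X| > \epsilon)$ for every fixed $x \in \R$, with an implied constant of the form $\max(1, \|G'\|_\infty)$, and then take the supremum over $x$. The entire argument rests on a single smoothing device: conditioning on whether the perturbation $X$ is small or large. On the ``good'' event $\{|X| \leq \epsilon\}$ the quantity $U + X$ is pinned within $\pm\epsilon$ of $U$, so the hypothesis on $U$ transfers up to a Lipschitz shift; on the ``bad'' event $\{|X| > \epsilon\}$ we simply pay its probability.

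First I would treat the upper bound. Splitting $\PP(U + X \leq x) \leq \PP(U + X \leq x,\ |X| \leq \epsilon) + \PP(|X| > \epsilon)$ and observing that on $\{|X| \leq \epsilon\}$ the event $U + X \leq x$ forces $U \leq x + \epsilon$, I obtain $\PP(U + X \leq x) \leq \PP(U \leq x + \epsilon) + \PP(|X| > \epsilon)$. The uniform hypothesis $\sup_x |\PP(U \leq x) - G(x)| \leq \eta$ lets me replace $\PP(U \leq x + \epsilon)$ by $G(x + \epsilon) + \eta$, and the Lipschitz bound $G(x + \epsilon) \leq G(x) + \|G'\|_\infty\,\epsilon$ closes this direction.

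The lower bound is symmetric. I would write $\PP(U + X \leq x) \geq \PP(U \leq x - \epsilon,\ |X| \leq \epsilon) \geq \PP(U \leq x - \epsilon) - \PP(|X| > \epsilon)$, since $\{U \leq x - \epsilon\} \cap \{|X| \leq \epsilon\}$ is contained in $\{U + X \leq x\}$. Invoking the same Kolmogorov-distance hypothesis gives $\PP(U \leq x - \epsilon) \geq G(x - \epsilon) - \eta$, and the Lipschitz bound $G(x - \epsilon) \geq G(x) - \|G'\|_\infty\,\epsilon$ completes it. Adding the two estimates and absorbing $\|G'\|_\infty$ into the implied constant yields the bound uniformly in $x$; taking $\sup_{x \in \R}$ then gives the lemma.

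There is no genuine obstacle here: the only point demanding care is the direction of the threshold shift when passing from $U + X$ to $U$ on $\{|X| \leq \epsilon\}$ — one shifts by $+\epsilon$ for the upper bound and by $-\epsilon$ for the lower bound — together with the remark that all three error contributions (the base rate $\eta$, the Lipschitz slack $\|G'\|_\infty\,\epsilon$, and the tail $\PP(|X| > \epsilon)$) are independent of $x$, so the supremum may be taken freely. The boundedness of $G'$ is precisely what makes the $\epsilon$-shift inexpensive; for $G = \Phi$ one has $\|\Phi'\|_\infty = 1/\sqrt{2\pi}$, so the implied constant is harmless in the intended application.
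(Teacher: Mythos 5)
Your proof is correct and complete: the conditioning on $\{|X|\leq\epsilon\}$ versus $\{|X|>\epsilon\}$, the $\pm\epsilon$ shifts of the threshold, and the Lipschitz absorption of $\|G'\|_\infty\,\epsilon$ together give exactly the claimed bound with implied constant $\max(1,\|G'\|_\infty)$, uniformly in $x$. Note that the paper itself offers no proof of Lemma \ref{closeone} --- it is stated as a generalization of Lemma 2.5 of Barbour and Tavar\'{e} \cite{BT}, where the same smoothing argument is carried out for $G=\Phi$ --- so your write-up supplies precisely the standard argument the paper implicitly relies on, generalized verbatim to an arbitrary Lipschitz $G$.
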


\begin{theorem}
\label{errorrate}
If $C^{(n)}$ is distributed according to $\textup{ESF}(\theta),$ then
\[\sup_{x \in \R} \left|\PP\!\left(\frac{\log\mathsf{invol}_n(C^{(n)}) - \frac{\theta}{2}\log^2\!n}{\sqrt{\frac{\theta}{3}\log^3\!n}} \leq x\right) - \Phi(x)\right| = O\!\left(\frac{1}{\sqrt{\log n}}\right).\]
\end{theorem}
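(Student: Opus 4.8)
The plan is to realize $\log\mathsf{invol}_n(C^{(n)})$ as a small additive perturbation of $\log\B_n(C^{(n)})$ and to feed this into Lemma \ref{closeone}, exactly as the placement of $S_{1,n}$ and $S_{2,n}$ suggests. Concretely, I would take $U = S_{1,n}$, $G = \Phi$, and $X = S_{2,n} - S_{1,n} = \big(\log\mathsf{invol}_n(C^{(n)}) - \log\B_n(C^{(n)})\big)/\sqrt{(\theta/3)\log^3 n}$. Because every Hermite factor satisfies $V_m(k) \geq 1$, we have $X \geq 0$, so $\PP(|X| > \epsilon) = \PP(X > \epsilon)$ and only a one-sided tail needs estimating. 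Since $\Phi$ is Lipschitz with $\|\Phi'\|_{\infty} = 1/\sqrt{2\pi}$, Lemma \ref{closeone} will yield $\sup_x|\PP(S_{2,n}\leq x) - \Phi(x)| \ll \eta + \epsilon + \PP(X>\epsilon)$ once the rate $\eta$ for $S_{1,n}$ and the tail of $X$ are in hand. Note that, unlike Theorem \ref{functional}, no Feller coupling is required: we work directly with $C^{(n)} \sim \textup{ESF}(\theta)$.

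First I would supply the rate for $U = S_{1,n}$. Lemma \ref{ET} gives the Erd\H{o}s-Tur\'{a}n rate $O(1/\sqrt{\log n})$ for $\log\B$ normalized by $(\mu_n, \sigma_n)$, whereas $S_{1,n}$ is normalized by $(\tfrac{\theta}{2}\log^2 n,\ \sqrt{(\theta/3)\log^3 n})$. Writing $S_{1,n} = a_nT_n + b_n$ with $T_n = (\log\B_n - \mu_n)/\sigma_n$, Euler-Maclaurin estimates $\mu_n = \tfrac{\theta}{2}\log^2 n + O(1)$ and $\sigma_n^2 = \tfrac{\theta}{3}\log^3 n + O(1)$, so that $a_n = 1 + O(\log^{-3} n)$ and $b_n = O(\log^{-3/2} n)$. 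Since a shift of centering by $O(1)$ and a rescaling by a factor $1 + O(\log^{-3}n)$ perturb the Kolmogorov distance to $\Phi$ by at most $|b_n|\,\|\Phi'\|_\infty + |a_n - 1|\sup_x|x\Phi'(x)| = o(1/\sqrt{\log n})$, this affine adjustment is negligible and $\eta := \sup_x|\PP(S_{1,n}\leq x) - \Phi(x)| = O(1/\sqrt{\log n})$.

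The heart of the argument is the tail bound for $X$, which I reduce to bounding $\E\big[\log\mathsf{invol}_n(C^{(n)}) - \log\B_n(C^{(n)})\big]$. The deterministic inequality $1 \leq V_m(k) \leq \exp(m^2/(2k))$ from Lemma \ref{stochastic} gives $0 \leq \log\mathsf{invol}_n(C^{(n)}) - \log\B_n(C^{(n)}) = \sum_{k=1}^{n}\log V_{c_k^{(n)}}(k) \leq \sum_{k=1}^{n}(c_k^{(n)})^2/(2k)$, so it suffices to show $\sum_{k=1}^n \E(c_k^{(n)})^2/(2k) = O(1)$. Writing $\E(c_k^{(n)})^2 = \E[c_k^{(n)}(c_k^{(n)}-1)] + \E c_k^{(n)}$ and invoking the $\textup{ESF}$ falling-factorial moment identity $\E(c_k^{(n)})_{(r)} = (\theta/k)^r\,\tfrac{n!\,\theta^{(n-rk)}}{(n-rk)!\,\theta^{(n)}}\mathbbold{1}\{rk\leq n\}$ with $r \in \{1,2\}$, the sum collapses to a convergent series. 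For $\theta > 1$ each ratio is at most $1$, so the bound is the clean Poissonian $\sum_k(\theta/(2k^2) + \theta^2/(2k^3)) = O(1)$. The genuinely delicate regime is $0 < \theta \leq 1$: there $\tfrac{n!\,\theta^{(n-rk)}}{(n-rk)!\,\theta^{(n)}} = \prod_{m=n-rk}^{n-1}\tfrac{m+1}{m+\theta}$ behaves like $(n/(n-rk))^{1-\theta}$ and blows up as $k \to n/r$, so the per-term Poisson bound fails near the endpoint. Here I would split the sum at $k \asymp n$ and show, using the $1/(2k)$ weight together with $\sum_r r^{\theta-1} = O(n^{\theta})$, that the near-endpoint block contributes only $O(1/n)$, leaving the total $O(1)$. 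This uniform-in-$\theta$ moment estimate is the step I expect to be the main obstacle.

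Finally, Markov's inequality gives $\PP(X > \epsilon) \leq \E[\log\mathsf{invol}_n - \log\B_n]/\big(\epsilon\sqrt{(\theta/3)\log^3 n}\big) = O\big(1/(\epsilon\log^{3/2} n)\big)$. Choosing $\epsilon = 1/\sqrt{\log n}$ then makes $\PP(X > \epsilon) = O(1/\log n)$, whence Lemma \ref{closeone} delivers $\sup_x|\PP(S_{2,n}\leq x) - \Phi(x)| \ll \eta + \epsilon + \PP(X>\epsilon) = O(1/\sqrt{\log n}) + O(1/\sqrt{\log n}) + O(1/\log n) = O(1/\sqrt{\log n})$. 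The rate is dominated by the Erd\H{o}s-Tur\'{a}n term $\eta$, which explains the remark that the convergence rate for $\log\mathsf{invol}$ inherits exactly the rate of the limit law for $\log\B$.
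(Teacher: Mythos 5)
Your proposal is correct, and it reaches the theorem by a genuinely different route at the one step where the two arguments diverge: the tail bound for $X = S_{2,n}-S_{1,n}$. The paper applies a Chernoff bound, $\PP(X>\epsilon)\le e^{-\epsilon\sqrt{(\theta/3)\log^3\!n}}\,\E\exp\!\left(\log\mathsf{invol}_n(C^{(n)})-\log\B_n(C^{(n)})\right)$, and controls the resulting expectation of the \emph{product} $\prod_k V_{c^{(n)}_k}(k)$ by passing to the Feller coupling: Lemma \ref{trivial} dominates $c^{(n)}_k$ by $Z_{k,0}+\mathbbold{1}\{L_n=k\}$, and the independence of the Poisson variables $Z_{k,0}$ lets the expectation factor, yielding the uniform bound $(1+\theta^2)e^{\theta^2\zeta(3)/2}$ and hence a polynomially small tail $O(n^{-\sqrt{\theta/3}})$. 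You instead bound only the \emph{first moment} of the difference, $\E[\log\mathsf{invol}_n-\log\B_n]\le\sum_{k}\E(c^{(n)}_k)^2/(2k)$, which by linearity needs no coupling and no independence, only the marginal ESF falling-factorial moments, and you then finish with Markov's inequality. The price is twofold: your tail is only $O(1/\log n)$ rather than polynomially small (still ample here, since the Erd\H{o}s--Tur\'{a}n term $\eta$ dominates the final rate), and you must confront the blow-up of the ratio $n!\,\theta^{(n-rk)}/((n-rk)!\,\theta^{(n)})\asymp (n/(n-rk+1))^{1-\theta}$ near $k\approx n/r$ when $0<\theta<1$, a complication the paper's Poissonized bound never sees; your proposed splitting does dispose of it, since on $k\le n/2$ the ratio is $O(1)$ and the Poisson-type series converges, while on $k>n/2$ the weight $\theta/(2k^2)=O(1/n^2)$ against $n^{1-\theta}\sum_{r\le n}r^{\theta-1}=O(n)$ contributes $O(1/n)$, and likewise for the $r=2$ block. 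A further point in your favor: you explicitly justify replacing the normalization $(\mu_n,\sigma_n)$ of Lemma \ref{ET} by $\left(\tfrac{\theta}{2}\log^2\!n,\sqrt{(\theta/3)\log^3\!n}\right)$ via an affine perturbation estimate, whereas the paper silently identifies the two. The main thing the paper's stronger exponential tail buys is reuse: it is invoked again in the subsequent $\theta=1$ optimality theorem with the much smaller $\epsilon=((\log\log n)/\log n)^{2/3}$, where your Markov bound would still suffice, but with far less margin.
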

\begin{proof}
We begin by applying Lemma \ref{closeone} with $U = S_{1, n},$ and $X = S_{2, n} - S_{1, n}.$ By Lemma \ref{ET}, there exists a constant $c = c(\theta)$ such that $\sup_x |\PP(S_{1, n} \leq x) - \Phi(x)| \leq c\log^{-1/2}\!n.$ So if we take $\eta = c\log^{-1/2}\!n$ and $\epsilon = \log^{-1/2}\!n,$ then applying a generic Chernoff bound yields
\begin{align}
\PP(|S_{2, n} - S_{1, n}| > \epsilon) &= \PP\!\left(\log\mathsf{invol}_n(C^{(n)}) - \log\B_n(C^{(n)}) > \epsilon\sqrt{\frac{\theta}{3}\log^3\!n}\right)
\\ &\leq \frac{\E e^{\log\mathsf{invol}_n(C^{(n)}) - \log\B_n(C^{(n)})}}{e^{\epsilon\sqrt{\frac{\theta}{3}\log^3\!n}}}
\\ &\leq n^{-\sqrt{\frac{\theta}{3}}}\E\!\left[\prod_{k = 1}^{n}\sum\limits_{j=0}^{\lfloor c_k/2 \rfloor} \frac{(c^{(n)}_k)_{2j}}{(2k)^{j}j!}\right]
\\ &\leq n^{-\sqrt{\frac{\theta}{3}}}\E\!\left[\prod_{k = 1}^{n}\sum\limits_{j=0}^{\lfloor c_k/2 \rfloor} \frac{(Z_{k, 0} +\mathbbold{1}\{L_n = k\})_{2j}}{(2k)^{j}j!}\right].
\end{align}
Leverage the fact that $\mathbbold{1}\{L_n = m\}$ for at most one integer $m$ to see that
\begin{align}
\E\!\left[\prod_{k = 1}^{n}\sum\limits_{j=0}^{\lfloor c_k/2 \rfloor} \frac{(Z_{k, 0} +\mathbbold{1}\{L_n = k\})_{2j}}{(2k)^{j}j!}\right] &\leq \E\!\left[\left(\sum_{j = 0}^{\lfloor c_k/2 \rfloor} \frac{(Z_{m, 0} + 1)_{2j}}{(2m)^jj!}\right)\prod_{\substack{1 \leq k \leq n \\ k \neq m}}\sum\limits_{j=0}^{\infty} \frac{(Z_{k, 0})_{2j}}{(2k)^{j}j!}\right]
\\ &= \E\!\left[\left(\sum_{j = 0}^{\lfloor c_k/2 \rfloor} \frac{Z_{m, 0} + 1}{Z_{m, 0} - 2j + 1}\cdot\frac{(Z_{m, 0})_{2j}}{(2m)^jj!}\right)\!\prod_{\substack{1 \leq k \leq n \\ k \neq m}}\sum\limits_{j=0}^{\infty} \frac{(Z_{k, 0})_{2j}}{(2k)^{j}j!}\right]
\\ &\leq \E\!\left[\left(\sum_{j = 0}^{\infty} \frac{(1+2j)(Z_{m, 0})_{2j}}{(2m)^jj!}\right)\!\prod_{\substack{1 \leq k \leq n \\ k \neq m}}\sum\limits_{j=0}^{\infty} \frac{(Z_{k, 0})_{2j}}{(2k)^{j}j!}\right]
\end{align}
Yet since the $Z_{k, 0}$ are pairwise independent and $Z_{k, 0} \sim \text{Poisson}(\theta/k),$ we can interchange the product and expectation operator and then compute the factorial moments to get
\begin{align}
(5.20) &= \left(\sum_{j = 0}^{\infty} \frac{(1+2j)\E(Z_{m, 0})_{2j}}{(2m)^jj!}\right)\!\prod_{\substack{1 \leq k \leq n \\ k \neq m}}\sum\limits_{j=0}^{\infty} \frac{(\theta/k)^{2j}}{(2k)^{j}j!}
\\ &= \left(\sum_{j = 0}^{\infty} \frac{(1+2j)(\theta/m)^{2j}}{(2m)^jj!}\right)\!\prod_{\substack{1 \leq k \leq n \\ k \neq m}}\sum\limits_{j=0}^{\infty} \frac{(\theta/k)^{2j}}{(2k)^{j}j!}
\\ &= \left(1 + \frac{\theta^2}{m^3}\right)\!e^{\frac{\theta^2}{2m^3}}\prod_{\substack{1 \leq k \leq n \\ k \neq m}} e^{\frac{\theta^2}{2k^3}} \leq (1 + \theta^2)e^{\frac{\theta^2}{2}\zeta(3)}.
\end{align}
Therefore $\PP(|S_{2, n} - S_{1, n}| > \epsilon) = O(n^{-\sqrt{\theta/3}}),$ and so
\begin{equation}
\sup_{x \in \R} |\PP(S_{2, n} \leq x) - \Phi(x)| \leq \frac{c + C_{\Phi}}{\sqrt{\log n}} + O\!\left(n^{-\sqrt{\frac{\theta}{3}}}\right) = O\!\left(\frac{1}{\sqrt{\log n}}\right).
\end{equation}
\end{proof}

According to a result of \cite{Zacharovas},
\[\sup_{x \in \R} \left|\PP\!\left(\frac{\log\B_n(C^{(n)}) - \frac{1}{2}\log^2\!n}{\sqrt{\frac{1}{3}\log^3\!n}} \leq x\right) - \Phi(x) - \frac{3^{3/2}}{24\sqrt{2\pi}}\frac{(1 - x^2)e^{-x^2/2}}{\sqrt{\log n}}\right| = O\!\left(\!\left(\frac{\log\log n}{\log n}\right)^{\frac{2}{3}}\right).\]
Putting into (\ref{closeone}),
\[U = \frac{\log\B_n(C^{(n)}) - \frac{1}{2}\log^2\!n}{\sqrt{\frac{1}{3}\log^3\!n}}, \hspace{2.5em} X = -\frac{\log\B_n(C^{(n)}) - \log\mathsf{invol}_n(C^{(n)})}{\sqrt{\frac{1}{3}\log^3 n}},\]
$\epsilon = ((\log\log n)/\log n)^{2/3},$ $\eta = \log^{-3/4}\!n,$ and observing that
\begin{equation}
\PP(|S_{2, n} - S_{1, n}| \geq \epsilon) \leq (1 + \theta^2)e^{\frac{\theta^2}{2}\zeta(3) - \epsilon\sqrt{\frac{\theta}{3}\log^3\!n}} = O\!\left(e^{-(\log\log n)^{2/3}\log^{5/6}\!n}\right)
\end{equation}
yields the following.

\begin{theorem}
If $C^{(n)}$ is distributed according to $\textup{ESF}(1),$ then
\[\sup_{x \in \R} \left|\PP\!\left(\frac{\log\mathsf{invol}_n(C^{(n)}) - \frac{1}{2}\log^2\!n}{\sqrt{\frac{1}{3}\log^3\!n}} \leq x\right) - \Phi(x) - \frac{3^{3/2}}{24\sqrt{2\pi}}\frac{(1 - x^2)e^{-x^2/2}}{\sqrt{\log n}}\right| = O\!\left(\!\left(\frac{\log\log n}{\log n}\right)^{\frac{2}{3}}\right).\]
\end{theorem}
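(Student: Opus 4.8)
The plan is to transport the problem from $\log\mathsf{invol}_n$ onto the order statistic $\log\B_n$, whose standardized distribution admits a genuine second-order Edgeworth refinement, and then to check that the passage from $\B$ back to $\mathsf{invol}$ is invisible at the scale $(\log\log n/\log n)^{2/3}$. The explicit correction term will turn out to be exactly the classical third-cumulant Edgeworth term for $\log\B_n$, while the slowly decaying error rate is inherited from the characteristic-function estimates underlying Theorem \ref{brownian}.

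First I would dispose of the gap between $\mathsf{invol}$ and $\B$. Set $X = S_{2,n} - S_{1,n} = (\log\mathsf{invol}_n(C^{(n)}) - \log\B_n(C^{(n)}))/\sqrt{\tfrac13\log^3 n}$; this is nonnegative, and combining $\log V_m(k)\le m^2/(2k)$ from Lemma \ref{stochastic} with $c^{(n)}_k \le Z_{k,0} + \mathbbold{1}\{L_n = k\}$ from Lemma \ref{trivial} (the indicator being nonzero for at most one $k$) gives $\E(\log\mathsf{invol}_n(C^{(n)}) - \log\B_n(C^{(n)})) = O(1)$, hence $\E X = O(\log^{-3/2} n)$. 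Choosing the threshold $\epsilon = (\log\log n/\log n)^{2/3}$, Markov's inequality yields $\PP(X > \epsilon)\le \E X/\epsilon = O(\log^{-5/6} n\,(\log\log n)^{-2/3}) \ll (\log\log n/\log n)^{2/3}$. Feeding $U = S_{1,n}$ and $G(x) = \Phi(x) + \tfrac{3^{3/2}}{24\sqrt{2\pi}}(1-x^2)e^{-x^2/2}/\sqrt{\log n}$ (which is Lipschitz, with $\|G'\|_\infty = O(1)$) into Lemma \ref{closeone} then reduces the theorem to proving the same Edgeworth estimate with $S_{1,n}$ in place of $S_{2,n}$.

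Next I would Poissonize, replacing $\log\B_n(C^{(n)})$ by the independent sum $\log\B_n(Z_0) = \sum_{k=1}^n Z_{k,0}\log k$ with $Z_{k,0}\sim\mathrm{Poisson}(1/k)$; for $\theta = 1$ the Feller-coupling discrepancy between the two is controlled within \cite{BT} at the target rate. Since every cumulant of a Poisson variable equals its mean, the $r$-th cumulant of $\log\B_n(Z_0)$ is $\kappa_r = \sum_{k=1}^n (\log k)^r/k$, so $\kappa_2 = \sigma_n^2 \sim \tfrac13\log^3 n$ and $\kappa_3 \sim \tfrac14\log^4 n$; moreover replacing the exact $(\mu_n, \sigma_n^2)$ by the leading terms $(\tfrac12\log^2 n, \tfrac13\log^3 n)$ perturbs the standardized law by only $O(\log^{-1} n)$ and is absorbed. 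The standardized skewness is thus $\gamma_1 = \kappa_3/\kappa_2^{3/2} \sim \tfrac{3^{3/2}}{4}\log^{-1/2} n$, and the leading Edgeworth correction $-\tfrac{\gamma_1}{6}(x^2 - 1)\phi(x)$, with $\phi$ the standard Gaussian density, reproduces exactly the stated term $\tfrac{3^{3/2}}{24\sqrt{2\pi}}(1 - x^2)e^{-x^2/2}/\sqrt{\log n}$.

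To make this rigorous at the claimed uniform rate I would run the characteristic-function method on $\varphi_n(t) = e^{-it\mu_n/\sigma_n}\prod_{k=1}^n \exp\!\big(\tfrac{1}{k}(e^{it\log k/\sigma_n} - 1)\big)$, the characteristic function of the standardized sum. A two-term cumulant expansion of $\log\varphi_n$ near the origin gives $e^{-t^2/2}(1 + \tfrac{i}{6}\gamma_1 t^3)$ up to a remainder governed by $\kappa_4/\kappa_2^2 = O(\log^{-1} n)$, whose Fourier inversion is $\Phi$ plus the correction term. The decisive step is Esseen's smoothing inequality, which turns this local estimate into a Kolmogorov-distance bound but demands control of $|\varphi_n(t)|$ over an entire window $|t|\le T$. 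The hard part will be precisely this non-local bound: away from the origin the frequencies $\log k$ cluster, so $\varphi_n$ does not decay like the transform of a smooth density, and balancing the large-$t$ contribution against the optimal cutoff $T$ is exactly what lifts the error from the naive $\log^{-1} n$ up to $(\log\log n/\log n)^{2/3}$. This characteristic-function estimate is the technical heart of the matter, and I would import it from the analysis of Barbour and Tavar\'e \cite{BT}; with it in hand, Esseen's inequality closes the argument at the stated rate.
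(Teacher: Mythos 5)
Your first paragraph is essentially the reduction the paper itself performs: the paper also applies Lemma \ref{closeone} with $U = S_{1,n}$, $X = S_{2,n}-S_{1,n}$, and $\epsilon = ((\log\log n)/\log n)^{2/3}$, the only difference being that it bounds $\PP(|X|>\epsilon)$ by the Chernoff-type estimate already obtained in the proof of Theorem \ref{errorrate}, which gives $O(e^{-c(\log\log n)^{2/3}\log^{5/6}n})$, whereas you use Markov's inequality on $\E X = O(\log^{-3/2}n)$; your weaker bound $O(\log^{-5/6}n\,(\log\log n)^{-2/3})$ is still $\ll ((\log\log n)/\log n)^{2/3}$, so that step is fine.

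The gap is everything after that. The paper does not prove the Edgeworth expansion for $S_{1,n}$: it quotes it as a known theorem of Zacharovas \cite{Zacharovas} for uniform random permutations, and that citation is the entire content of the hard step. Your plan to re-derive it --- Poissonize via the Feller coupling, compute cumulants, then run Esseen with a characteristic-function bound ``imported from \cite{BT}'' --- breaks down at the Poissonization step, at exactly the scale that matters. For $\theta = 1$ one computes $\E Z_{k,n} = 1/(n+k)$, so with $Y_n = \sum_{k \leq n} Z_{k,n}$ one has $\E Y_n = \sum_{k\leq n} 1/(n+k) \to \log 2$; moreover the expected number of spacings of length between $n/2$ and $n$ occurring after position $n$ stays bounded away from zero. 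Hence, with probability bounded away from zero, the coupling discrepancy $\log\B_n(C^{(n)})-\log\B_n(Z_0)$ contains a term $\log k$ with $k \asymp n$, i.e.\ the standardized discrepancy is of size $\asymp \log^{-1/2}n$ with constant probability. That is the same order as the Edgeworth term you are trying to isolate and is much larger than the target $((\log\log n)/\log n)^{2/3}$; smoothing combined with Markov or Chebyshev bounds on this discrepancy can yield at best rates like $\log^{-1/4}n$ or $\log^{-1/3}n$ for the transfer, never $\log^{-2/3+o(1)}n$. Correspondingly, the appeal to \cite{BT} is misplaced: the whole of Barbour--Tavar\'{e}'s analysis produces the rate $O(\log^{-1/2}n)$ (this is Lemma \ref{ET} here), and it contains neither a coupling estimate at the rate $((\log\log n)/\log n)^{2/3}$ nor the non-local characteristic-function bound your Esseen step needs; that bound, and the treatment of the dependence among cycle counts which the coupling cannot resolve at this precision, are precisely Zacharovas's contribution, obtained by different (analytic, generating-function) methods. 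As written, your proposal therefore assumes the theorem's only deep ingredient. Your cumulant computation ($\kappa_3 \sim \tfrac14\log^4 n$, $\gamma_1 \sim \tfrac{3^{3/2}}{4}\log^{-1/2}n$) correctly identifies the correction term and is a good consistency check, but it is not a proof; replacing your second and third paragraphs with a citation of \cite{Zacharovas}, as the paper does, would make the argument complete.
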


Zacharovas's result on the optimality of the convergence rate for the Erd\H{o}s-Tur\'{a}n limit law is constructive and can be mimicked for $\theta$-weighted permutations in general. For this reason, the rate of convergence stated in Theorem \ref{errorrate} is the best possible. The proof is excluded here, but all of the relevant details can be found in \cite{Zacharovas}.

Although we decline from doing so here, we remark that it is straightforward to compute explicit bounds for the constants implied by the error terms in Theorems \ref{functional} and \ref{errorrate}. The estimates needed to measure the distance between the distributions of $\log\B_n(C^{(n)})$ and $\log\B_n(Z)$ already appear in \cite{BT}. From there, a Berry-Ess\'{e}en inequality will disclose how quickly $\log\B_n(Z)$ tends to its Gaussian limit law. The constants implied by (\ref{distance1}) and (\ref{distance2}) require more delicate asymptotic evaluation, but can be safely ignored since the dominant error term for Theorem \ref{functional} resides in Theorem \ref{brownian}.

\section*{Concluding Remarks}

The nonuniformity of $\mathsf{invol}$ makes it evident that the composition of two uniformly random involutions is not a uniformly random permutation. Furthermore, if we let $t_n$ denote the total number of involutions in $\mathfrak{S}_n,$ then
\begin{equation}
t_n^2 \sim n!\cdot\frac{e^{2\sqrt{n}}}{\sqrt{8\pi en}} \sim \left|\mathfrak{S}_n\right|\cdot\E_{1, n}\mathsf{invol}.
\end{equation}
(Refer to pg. 583, Example VIII.9, \cite{FS}.) Therefore, using pairs of random involutions to generate random permutations introduces a significant sampling bias.

Like $\text{ESF}(\theta)$ for $\theta > 1,$ a composition of two random involutions is likelier to have a lot of cycles. However, the distribution of $c_k$ looks quite different in this model. The factors of the exponential generating function in (\ref{bgf}) suggests that $c_k$ converges in law to the distribution of $X + 2Y_k,$ where $X$ and $Y_k$ are independent Poisson random variables of mean 1 and $1/(2k),$ respectively.  \cite{LugoCompose} proved as much in an unpublished manuscript. The author intends to expand on these and other observations in a followup paper.

\acknowledgements
This paper refines various elements of the author's doctoral thesis. The author wishes to thank his advisor, Eric Schmutz, for his extensive guidance. The author also expresses his gratitude to Dirk Zeindler for first suggesting to him that the results of \cite{BS} can be generalized to the Ewens Sampling Formula, to Hsien-Kuei Hwang for helpful discussions on how to tackle the variance of $\mathsf{invol},$ and to Eugenijus Manstavi\v{c}ius for reaching out and sharing some insightful background on this topic.

\section{Appendix}
\label{sec:appendix}
This appendix compiles some technical prerequisites which are used sporadically throughout the manuscript.

\subsection{Wright's Expansions}

The following consequence of the saddle-point method applied to functions with exponential singularities is often helpful in unraveling the asymptotic development of various random permutation statistics.

\begin{theorem}
\label{Wrightgeneral}
\textup{(\cite{Wright2})}
Suppose
\[f(z) = (1 - z)^{-\beta}\phi(z)e^{P(z)},\]
where $\phi$ is regular in the unit disc, $\phi(1) \neq 0,$
\[P(z) = \sum_{m = 1}^{M} \frac{\alpha_m}{(1 - z)^{\rho_m}},\]
$\rho_1, \ldots, \rho_M$ are real, $\beta, \alpha_1, \ldots, \alpha_M$ are complex, $\rho_1 > 0,$ and $\alpha_1 \neq 0.$ If in addition
\[P(e^{-u}) = Au^{-\rho}\left(1 + \sum_{\ell = 1}^{L} A_{\ell}u^{\sigma_{\ell}}\right) + O(u^K), \hspace{2.5em} u \to 0^+,\]
where $ A \in \C - \R_{\leq 0},$ $0 < \sigma_1 < \sigma_2 < \cdots < \sigma_L \leq \rho,$ and $L,$ $A,$ $\rho,$ the $\sigma_{\ell},$ and the $A_{\ell}$ are all calculable in terms of the $\rho_m$ and $\alpha_m,$ then
\[[z^n]f(z) = \frac{N^{\beta - \frac{1}{2}}e^{\Omega(N)}\phi(1)}{\sqrt{2\pi n(\rho + 1)}}\left(1 + O(n^{-K})\right), \hspace{2.5em} n \to \infty,\]
where
\begin{align*}
N &= \left(\frac{n}{A\rho}\right)^{\frac{1}{\rho + 1}},
\\ \Omega(N) &= A(1 + \rho)N^{\rho} + A\sum_{\ell = 1}^{L} A_{\ell}N^{\rho - \sigma_{\ell}} + \sum_{s = 2}^{\infty} \frac{(-1)^{s - 1}(A\rho)^{1 - \frac{\rho}{\rho + 1}}}{s!(\rho + 1)}\left.\left[\frac{d^{s - 2}}{dy^{s - 2}}(y^{\frac{\rho}{\rho + 1}-2}(\vartheta(y))^s)\right]\right|_{y = n}, \text{and}
\\ \vartheta(y) &= \frac{1}{\rho}\sum_{\ell = 1}^{L} (\rho - \sigma_{\ell})A_{\ell}\left(\frac{y}{A\rho}\right)^{-\frac{\sigma_{\ell}}{\rho + 1}}.
\end{align*}
\end{theorem}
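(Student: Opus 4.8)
The plan is to read off $[z^n]f(z)$ from Cauchy's coefficient integral and evaluate it by the saddle-point (steepest descent) method: the exponential singularity of $f$ at $z=1$ forces the integral to concentrate near a single saddle, and all of the structure in the statement is the asymptotic expansion of the local integral there. First I would write, for any $0<r<1$,
\[
[z^n]f(z)=\frac{1}{2\pi i}\oint_{|z|=r}\frac{f(z)}{z^{n+1}}\,dz,
\]
and substitute $z=e^{-w}$, which turns the circle $|z|=r$ into a vertical segment $w=r_0+it$, $-\pi\le t\le\pi$ with $r_0=-\log r$, and gives
\[
[z^n]f(z)=\frac{1}{2\pi i}\int (1-e^{-w})^{-\beta}\phi(e^{-w})\exp\!\big(nw+P(e^{-w})\big)\,dw .
\]
By hypothesis $P(e^{-w})=Aw^{-\rho}\big(1+\sum_{\ell}A_\ell w^{\sigma_\ell}\big)+O(w^{K})$ near $w=0$, so the dominant part of the exponent is $h(w):=nw+Aw^{-\rho}$.

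Next I would locate the saddle. Solving $h'(w)=n-A\rho\,w^{-\rho-1}=0$ gives $w_0=(A\rho/n)^{1/(\rho+1)}=1/N$, which is exactly the quantity $N$ in the statement, and direct computation yields $h(w_0)=A(1+\rho)N^{\rho}$ and $h''(w_0)=(\rho+1)nN$. I would deform the contour to pass through $w_0$ in the steepest-descent direction; here the hypothesis $A\in\C-\R_{\le 0}$ is what guarantees (on the principal branch) that $w_0$ has positive real part, so the deformed contour stays inside the unit disc where $f$ is analytic. A quick check shows the saddle width $h''(w_0)^{-1/2}\asymp N^{-(\rho+2)/2}$ is asymptotically negligible against $w_0\asymp N^{-1}$, which is precisely what licenses a purely local analysis.

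Laplace's method then delivers the leading term. Replacing the slowly varying factor by its value at the saddle, $(1-e^{-w_0})^{-\beta}\phi(e^{-w_0})\sim w_0^{-\beta}\phi(1)=N^{\beta}\phi(1)$, and carrying out the Gaussian integral $\frac{1}{2\pi}\int e^{-\frac12 h''(w_0)t^2}\,dt=(2\pi h''(w_0))^{-1/2}$, I obtain
\[
[z^n]f(z)\sim \frac{N^{\beta-\frac12}\phi(1)}{\sqrt{2\pi n(\rho+1)}}\,e^{h(w_0)},
\]
which already matches the claimed prefactor and the first term $A(1+\rho)N^{\rho}$ of $\Omega(N)$. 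The remaining pieces of $\Omega(N)$—the finite sum $A\sum_\ell A_\ell N^{\rho-\sigma_\ell}$ and the infinite series assembled from $\vartheta(y)$ and its iterated derivatives—arise by reinstating the subleading terms $Aw^{-\rho}\sum_\ell A_\ell w^{\sigma_\ell}$ at the perturbed saddle and by retaining the cubic-and-higher Taylor corrections of $h$ in the steepest-descent expansion; pushing this to the order set by the $O(w^{K})$ remainder produces the stated factor $1+O(n^{-K})$.

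The hard part will be twofold. The tail estimate requires showing that the contribution of the minor arc, where $|t|$ exceeds a threshold slightly larger than the saddle width, is exponentially smaller than the main term; this rests on the fact that $\mathrm{Re}\,h(w)$ strictly decreases as $w$ moves along the circle away from $w_0$, and it is again the condition $A\in\C-\R_{\le 0}$ that secures the correct sign and rules out spurious secondary saddles on $|z|=r$. The more laborious obstacle is the exact bookkeeping of $\Omega(N)$: the iterated-derivative series encodes both the reexpansion of the saddle location under the perturbation $\sum_\ell A_\ell w^{\sigma_\ell}$ and the non-Gaussian moments of the local integral, so matching these terms \emph{precisely}—rather than merely bounding them—is where the genuine work of Wright's argument lies.
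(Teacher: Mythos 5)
This theorem is not proved in the paper at all: it is stated in the appendix as a quoted prerequisite, attributed to Wright \cite{Wright2}, so there is no internal proof to compare yours against. What you propose is the same method Wright's original paper executes, and the computations you actually carry out are correct: the saddle of $h(w)=nw+Aw^{-\rho}$ is at $w_0=(A\rho/n)^{1/(\rho+1)}=1/N$, direct substitution gives $h(w_0)=A\rho N^{\rho}+AN^{\rho}=A(1+\rho)N^{\rho}$ and $h''(w_0)=A\rho(\rho+1)N^{\rho+2}=(\rho+1)nN$, and pairing the Gaussian factor $(2\pi h''(w_0))^{-1/2}$ with $(1-e^{-w_0})^{-\beta}\phi(e^{-w_0})\sim N^{\beta}\phi(1)$ reproduces exactly the stated prefactor $N^{\beta-\frac12}\phi(1)/\sqrt{2\pi n(\rho+1)}$ and the first term of $\Omega(N)$. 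Two cautions are in order. First, your claim that $A\in\C-\R_{\leq 0}$ guarantees $\Re(w_0)>0$ on the principal branch is only automatic when $\rho\geq 1$: since $\arg w_0=\arg A/(\rho+1)$, positivity of the real part requires $|\arg A|<\pi(\rho+1)/2$, which can fail for $0<\rho<1$ and $\arg A$ near $\pm\pi$; Wright's handling of genuinely complex $A$ is more delicate than a one-line appeal to the hypothesis, and this is precisely the kind of place where a sketch can silently go wrong (in the present paper's applications $A$ is real and positive, so the issue never bites there). Second, you correctly identify, but defer, the two genuinely hard components: the uniform minor-arc bound and the exact reconciliation of the perturbed-saddle and higher-cumulant corrections with the iterated-derivative series defining $\Omega(N)$, including the claimed $1+O(n^{-K})$ error. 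As it stands your proposal is a sound and correctly calibrated plan that verifies the leading asymptotics, not a complete proof of the full expansion; completing it would essentially amount to reproducing Wright's paper, which is why the author cites it rather than proving it.
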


With one exception in section 3, we solely need this simpler instance of Theorem \ref{Wrightgeneral}.

\begin{corollary}
\label{Wrightasymptotic}
\textup{(\cite{Wright1})}
The leading-term asymptotic for
\[s_n = [z^n](1 - z)^{\beta}\phi(z)\exp\left(\frac{\alpha}{1 - z}\right),\]
where $\beta$ is a complex number, $\phi$ is regular in the unit disc, $\phi(1) \neq 0,$ and $\alpha$ is a nonzero real number, is given by
\[s_n = \frac{1}{n^{\frac{\beta}{2} + \frac{3}{4}}}\left[\frac{\exp\!\left(2\sqrt{\alpha n}\right)}{2\sqrt{\pi}}\phi(1)e^{\frac{\alpha}{2}}\alpha^{\frac{\beta}{2} + \frac{1}{4}}\right]\!\left(1 + O(n^{-\frac{1}{2}})\right).\]
\end{corollary}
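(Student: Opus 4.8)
The plan is to obtain Corollary~\ref{Wrightasymptotic} as the simplest nontrivial instance of Theorem~\ref{Wrightgeneral}: the case $M = 1$ in which the only singular part is a single simple pole, $P(z) = \alpha/(1 - z)$. Before specializing I would reconcile the exponent conventions. Theorem~\ref{Wrightgeneral} is phrased for $f(z) = (1 - z)^{-\beta}\phi(z)e^{P(z)}$, whereas the corollary carries a factor $(1 - z)^{+\beta}$; I therefore apply the theorem with its algebraic exponent set equal to $-\beta$, so that the output power $N^{\beta - 1/2}$ becomes $N^{-\beta - 1/2}$. The hypotheses that $\phi$ be regular on the unit disc with $\phi(1) \neq 0$ carry over unchanged, and I restrict to $\alpha > 0$ (so that $A = \alpha \in \C - \R_{\leq 0}$); this is the only regime producing the asserted $e^{2\sqrt{\alpha n}}$ growth and the only one invoked elsewhere in the paper.

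Next I would compute the small-$u$ expansion of $P(e^{-u})$ demanded by the theorem. From $1 - e^{-u} = u\bigl(1 - u/2 + u^2/6 - \cdots\bigr)$ one gets
\[
P(e^{-u}) = \frac{\alpha}{1 - e^{-u}} = \alpha\Bigl(\tfrac{1}{u} + \tfrac12 + \tfrac{u}{12} + \cdots\Bigr) = \alpha u^{-1}\Bigl(1 + \tfrac12 u + \tfrac{1}{12}u^2 + \cdots\Bigr).
\]
Matching against $A u^{-\rho}\bigl(1 + \sum_\ell A_\ell u^{\sigma_\ell}\bigr) + O(u^K)$ forces $A = \alpha$ and $\rho = 1$, leaves a single admissible correction exponent $\sigma_1 = 1 = \rho$ with weight $A_1 = \tfrac12$, and exhibits a remainder of order $u$; one may thus take $K = \tfrac12$, which already delivers the stated $O(n^{-1/2})$.

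I would then evaluate the explicit constants. The saddle abscissa is $N = (n/(A\rho))^{1/(\rho + 1)} = \sqrt{n/\alpha}$. The key simplification is that, because $\sigma_1$ equals $\rho$, the factor $(\rho - \sigma_1)$ vanishes, so $\vartheta \equiv 0$ and the entire $s \geq 2$ series in $\Omega$ collapses. What remains is
\[
\Omega(N) = A(1 + \rho)N^{\rho} + A A_1 N^{\rho - \sigma_1} = 2\alpha N + \tfrac{\alpha}{2} = 2\sqrt{\alpha n} + \tfrac{\alpha}{2}.
\]
Substituting $N = \sqrt{n/\alpha}$ and $\rho = 1$ into the prefactor $N^{-\beta - 1/2}\phi(1)/\sqrt{2\pi n(\rho + 1)}$ and collecting the resulting powers of $n$ and $\alpha$ reproduces exactly $\frac{1}{2\sqrt{\pi}}\phi(1)e^{\alpha/2}\alpha^{\beta/2 + 1/4}n^{-\beta/2 - 3/4}e^{2\sqrt{\alpha n}}\bigl(1 + O(n^{-1/2})\bigr)$, which is the claim.

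The step I would watch most closely is the provenance of the constant factor $e^{\alpha/2}$: it comes entirely from the boundary term $\sigma_1 = \rho$, that is, from the constant $\alpha/2$ in the expansion of $\alpha/(1 - e^{-u})$, and it is precisely the term one is tempted to discard as higher order. A transparent sanity check is the classical direct route behind Wright~\cite{Wright1}: writing
\[
s_n = \frac{1}{2\pi i}\oint (1 - z)^{\beta}\phi(z)\,e^{\alpha/(1 - z)}z^{-n - 1}\,dz,
\]
locating the coalescing saddle near $z = 1 - \sqrt{\alpha/n}$ via $\alpha/(1 - z)^2 = (n + 1)/z$, and performing a Laplace expansion there, the factor $e^{\alpha/2}$ emerges from the $O(1 - z)$ term of $\alpha/(1 - z)$ evaluated at the saddle, while $\phi(z) \to \phi(1)$ supplies the amplitude. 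This picture also explains the error scale: since the saddle lies at distance $\asymp n^{-1/2}$ from $z = 1$, the local approximations $\phi(z) \approx \phi(1)$ and the Gaussian reduction carry relative error $O(n^{-1/2})$. The main obstacle in either approach is the routine but essential estimate that the contour contribution away from this saddle is exponentially negligible against $e^{2\sqrt{\alpha n}}$, which is standard for an isolated essential singularity at $z = 1$.
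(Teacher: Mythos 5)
Your proposal is correct and follows the same route the paper intends: Corollary \ref{Wrightasymptotic} is presented there precisely as the specialization of Theorem \ref{Wrightgeneral} to $P(z) = \alpha/(1-z)$, and your instantiation ($A = \alpha$, $\rho = 1$, $\sigma_1 = \rho$, $A_1 = \tfrac12$, with the sign flip $\beta \mapsto -\beta$ and the vanishing of $\vartheta$ killing the $s \geq 2$ series) reproduces the stated constants, including the easily-missed $e^{\alpha/2}$ from the boundary term $\sigma_1 = \rho$. Your added restriction to $\alpha > 0$ is the right reading of the hypothesis $A \in \C - \R_{\leq 0}$, and matches every use of the corollary in the paper.
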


\subsection{The Mellin Transform}
\label{sec:mellin}
To conclude this section, we review some pertinent facts about the Mellin transform
as a means of extracting asymptotic estimates. Given a locally integrable function $f$
defined over $\R_{> 0},$ the Mellin transform of $f$
is the complex-variable function
\[f^{\star}(s) := \int_{0}^{\infty} f(x)x^{s - 1}\,dx.\]
The transform is also occasionally denoted by $\mathcal{M}[f]$ or $\mathcal{M}[f(x); s].$
If $f(x) = O(x^{-a})$ as $x \to 0^+$ and $f(x) = O(x^{-b})$ as $x \to +\infty,$
then $f^*$ is an analytic function on the ``fundamental strip'' $a < \Re(z) < b$
and is often continuable to a meromorphic function on the whole of $\C.$

Notably, the Mellin transform is a linear operator that satisfies the rescaling rule
\[\mathcal{M}[f(\mu x); s] = \mu^{-s}f^{\star}(s),\]
for any $\mu > 0.$ This together with the linearity of $\mathcal{M}$ yields the ``harmonic sum rule''
\[\mathcal{M}\!\left[\sum_k \lambda_k f(\mu_k x); s\right] = \left(\sum_k \lambda_k \mu_k^{-s}\right)\!f^{\star}(s),\]
provided that the integral and summation are interchangeable. Sums of the form $\sum_k \lambda_kf(\mu_k x)$
are known as harmonic sums with base function $f,$ amplitudes $\lambda_k,$ and frequencies $\mu_k.$
In other words, the Mellin transform of a harmonic sum factors into the product of the Mellin transform of the
base function and a generalized Dirichlet series associated with the corresponding sequence of amplitudes and frequencies.
For instance, we can obtain the useful Mellin pairs
\begin{equation}
\label{Mellinexample1}
\mathcal{M}\left[\frac{e^{-x}}{1 - e^{-x}};s\right] = \mathcal{M}\left[\sum_{k = 1}^{\infty} e^{-kx}; s\right] = \left(\sum_{k = 1}^{\infty} k^{-s}\right)\mathcal{M}[e^{-x}; s] = \zeta(s)\Gamma(s)
\end{equation}
and
\begin{equation}
\label{Mellinexample2}
\mathcal{M}\left[\frac{e^{-x}}{(1 - e^{-x})^2};s\right] = \mathcal{M}\left[\sum_{k = 1}^{\infty} ke^{-kx};s\right] = \left(\sum_{k = 1}^{\infty} k^{-s + 1}\right)\mathcal{M}[e^{-x}; s] = \zeta(s - 1)\Gamma(s).
\end{equation}

Akin to many other valuable integral transforms, the Mellin transform is essentially involutive.
If $f$ is continuous in an interval containing $x,$ then $f(x)$ can be recovered from
$f^{\star}(s)$ via the inversion formula
\[f(x) = \frac{1}{2\pi i}\int_{c - i\infty}^{c + i\infty} f^{\star}(s)x^{-s}\,ds,\]
where the abscissa $c$ must be chosen from the fundamental strip of $f.$ If the continuation of $f^{\star}(s)$
decays quickly enough as $\Im(c) \to +\infty,$ then the inverse Mellin integral and Cauchy's residue theorem
can be combined to derive an asymptotic series for $f(x)$ as $x \to 0^+.$ We have the following
transfer rule in particular: a pole of order $k + 1$ at $s = s_0$ with
\[f^{\star}(s) \underset{s \to s_0}{\sim} a_{s_0}\frac{(-1)^kk!}{(s - s_0)^{k + 1}}\]
translates to the term $a_{s_0}x^{-s_0}(\log x)^k$ in the singular expansion of $f(x)$ near
$x = 0.$ (The specific requirements for this transfer rule are all fulfilled by the special functions appearing in this manuscript,
as the gamma function decays rapidly along vertical lines in the complex plane whereas polylogarithms undergo only moderate growth.) Detailed explanations along with further uses of the
Mellin transform towards solving asymptotic enumeration problems can be found in \cite{FGD} and \cite{FS}.

\nocite{*}
\bibliographystyle{abbrvnat}
\bibliography{Involution_factorizations_of_Ewens_Random_Permutations}

\begin{thebibliography}{38}
\providecommand{\natexlab}[1]{#1}
\providecommand{\url}[1]{\texttt{#1}}
\expandafter\ifx\csname urlstyle\endcsname\relax
  \providecommand{\doi}[1]{doi: #1}\else
  \providecommand{\doi}{doi: \begingroup \urlstyle{rm}\Url}\fi

\bibitem[Arratia and Tavar\'e(1992)]{ATdp}
R.~Arratia and S.~Tavar\'e.
\newblock Limit theorems for combinatorial structures via discrete process
  approximations.
\newblock \emph{Random Structures Algorithms}, 3\penalty0 (3):\penalty0
  321--345, 1992.
\newblock ISSN 1042-9832,1098-2418.
\newblock URL \url{https://doi.org/10.1002/rsa.3240030310}.

\bibitem[Arratia et~al.(2000)Arratia, Barbour, and Tavar\'e]{ABT1}
R.~Arratia, A.~D. Barbour, and S.~Tavar\'e.
\newblock Limits of logarithmic combinatorial structures.
\newblock \emph{Ann. Probab.}, 28\penalty0 (4):\penalty0 1620--1644, 2000.
\newblock ISSN 0091-1798,2168-894X.
\newblock URL \url{https://doi.org/10.1214/aop/1019160500}.

\bibitem[Arratia et~al.(2003)Arratia, Barbour, and Tavar\'e]{ABT2}
R.~Arratia, A.~D. Barbour, and S.~Tavar\'e.
\newblock \emph{Logarithmic combinatorial structures: a probabilistic
  approach}.
\newblock EMS Monographs in Mathematics. European Mathematical Society (EMS),
  Z\"urich, 2003.
\newblock ISBN 3-03719-000-0.
\newblock URL \url{https://doi.org/10.4171/000}.

\bibitem[Arratia et~al.(2016)Arratia, Barbour, and Tavar\'e]{ABTsurvey}
R.~Arratia, A.~D. Barbour, and S.~Tavar\'e.
\newblock Exploiting the {F}eller coupling for the {E}wens sampling formula
  [comment on {MR}3458585].
\newblock \emph{Statist. Sci.}, 31\penalty0 (1):\penalty0 27--29, 2016.
\newblock ISSN 0883-4237,2168-8745.
\newblock URL \url{https://doi.org/10.1214/15-STS537}.

\bibitem[Babu and Manstavichyus(2002)]{BM}
G.~D. Babu and E.~Manstavichyus.
\newblock Infinitely divisible limit processes for the {E}wens sampling
  formula.
\newblock \emph{Liet. Mat. Rink.}, 42\penalty0 (3):\penalty0 294--307, 2002.
\newblock ISSN 0132-2818.
\newblock URL \url{https://doi.org/10.1023/A:1020265607917}.

\bibitem[Barbour and Tavar\'e(1994)]{BT}
A.~D. Barbour and S.~Tavar\'e.
\newblock A rate for the {E}rd\"os-{T}ur\'an law.
\newblock \emph{Combin. Probab. Comput.}, 3\penalty0 (2):\penalty0 167--176,
  1994.
\newblock ISSN 0963-5483,1469-2163.
\newblock URL \url{https://doi.org/10.1017/S0963548300001097}.

\bibitem[Bhatia et~al.(2015)Bhatia, Egri-Nagy, and Francis]{BEF}
S.~Bhatia, A.~Egri-Nagy, and A.~R. Francis.
\newblock Algebraic double cut and join: a group-theoretic approach to the
  operator on multichromosomal genomes.
\newblock \emph{J. Math. Biol.}, 71\penalty0 (5):\penalty0 1149--1178, 2015.
\newblock ISSN 0303-6812,1432-1416.
\newblock URL \url{https://doi.org/10.1007/s00285-014-0852-1}.

\bibitem[Billingsley(1995)]{Billingsley}
P.~Billingsley.
\newblock \emph{Probability and measure}.
\newblock Wiley Series in Probability and Mathematical Statistics. John Wiley
  \& Sons, Inc., New York, third edition, 1995.
\newblock ISBN 0-471-00710-2.
\newblock A Wiley-Interscience Publication.

\bibitem[Bovey(1980)]{Bovey}
J.~D. Bovey.
\newblock An approximate probability distribution for the order of elements of
  the symmetric group.
\newblock \emph{Bull. London Math. Soc.}, 12\penalty0 (1):\penalty0 41--46,
  1980.
\newblock ISSN 0024-6093,1469-2120.
\newblock URL \url{https://doi.org/10.1112/blms/12.1.41}.

\bibitem[Burnette and Schmutz(2016)]{BS}
C.~Burnette and E.~Schmutz.
\newblock Representing random permutations as the product of two involutions.
\newblock \emph{Online J. Anal. Comb.}, \penalty0 (11):\penalty0 Paper No. 6,
  9, 2016.
\newblock ISSN 1931-3365.
\newblock URL
  \url{https://combinatorialpress.com/ojac-articles/issue-11-2016/representing-random-permutations-as-the-product-of-two-involutions/}.

\bibitem[Burnette(2017)]{Burnette}
C.~D. Burnette.
\newblock \emph{Factoring {P}ermutations into the {P}roduct of {T}wo
  {I}nvolutions: {A} {P}robabilistic, {C}ombinatorial, and {A}nalytic
  {A}pproach}.
\newblock ProQuest LLC, Ann Arbor, MI, 2017.
\newblock ISBN 978-0355-06953-2.
\newblock URL
  \url{http://gateway.proquest.com/openurl?url_ver=Z39.88-2004&rft_val_fmt=info:ofi/fmt:kev:mtx:dissertation&res_dat=xri:pqm&rft_dat=xri:pqdiss:10598869}.
\newblock Thesis (Ph.D.)--Drexel University.

\bibitem[Chindelevitch and Meidanis(2017)]{CM}
L.~Chindelevitch and J.~a. Meidanis.
\newblock On the rank-distance median of 3 permutations.
\newblock In \emph{Comparative genomics}, volume 10562 of \emph{Lecture Notes
  in Comput. Sci.}, pages 256--276. Springer, Cham, 2017.
\newblock ISBN 978-3-319-67979-2; 978-3-319-67978-5.
\newblock URL \url{https://doi.org/10.1007/978-3-319-67979-2_14}.

\bibitem[Crane(2016)]{Crane}
H.~Crane.
\newblock The ubiquitous {E}wens sampling formula.
\newblock \emph{Statist. Sci.}, 31\penalty0 (1):\penalty0 1--19, 2016.
\newblock ISSN 0883-4237,2168-8745.
\newblock URL \url{https://doi.org/10.1214/15-STS529}.

\bibitem[DeLaurentis and Pittel(1985)]{DP}
J.~M. DeLaurentis and B.~G. Pittel.
\newblock Random permutations and {B}rownian motion.
\newblock \emph{Pacific J. Math.}, 119\penalty0 (2):\penalty0 287--301, 1985.
\newblock ISSN 0030-8730,1945-5844.
\newblock URL \url{http://projecteuclid.org/euclid.pjm/1102706156}.

\bibitem[Erd\"os and Tur\'an(1965)]{ET1}
P.~Erd\"os and P.~Tur\'an.
\newblock On some problems of a statistical group-theory. {I}.
\newblock \emph{Z. Wahrscheinlichkeitstheorie und Verw. Gebiete}, 4:\penalty0
  175--186, 1965.
\newblock URL \url{https://doi.org/10.1007/BF00536750}.

\bibitem[Erd\"os and Tur\'an(1967)]{ET4}
P.~Erd\"os and P.~Tur\'an.
\newblock On some problems of a statistical group-theory. {III}.
\newblock \emph{Acta Math. Acad. Sci. Hungar.}, 18:\penalty0 309--320, 1967.
\newblock ISSN 0001-5954,1588-2632.
\newblock URL \url{https://doi.org/10.1007/BF02280290}.

\bibitem[Flajolet and Odlyzko(1990)]{FO}
P.~Flajolet and A.~Odlyzko.
\newblock Singularity analysis of generating functions.
\newblock \emph{SIAM J. Discrete Math.}, 3\penalty0 (2):\penalty0 216--240,
  1990.
\newblock ISSN 0895-4801.
\newblock URL \url{https://doi.org/10.1137/0403019}.

\bibitem[Flajolet and Sedgewick(2009)]{FS}
P.~Flajolet and R.~Sedgewick.
\newblock \emph{Analytic combinatorics}.
\newblock Cambridge University Press, Cambridge, 2009.
\newblock ISBN 978-0-521-89806-5.
\newblock URL \url{https://doi.org/10.1017/CBO9780511801655}.

\bibitem[Flajolet et~al.(1995)Flajolet, Gourdon, and Dumas]{FGD}
P.~Flajolet, X.~Gourdon, and P.~Dumas.
\newblock Mellin transforms and asymptotics: harmonic sums.
\newblock volume 144, pages 3--58. 1995.
\newblock URL \url{https://doi.org/10.1016/0304-3975(95)00002-E}.
\newblock Special volume on mathematical analysis of algorithms.

\bibitem[Kawahara et~al.(2019)Kawahara, Saitoh, and Yoshinaka]{KSY}
J.~Kawahara, T.~Saitoh, and R.~Yoshinaka.
\newblock The time complexity of permutation routing via matching, token
  swapping and a variant.
\newblock \emph{J. Graph Algorithms Appl.}, 23\penalty0 (1):\penalty0 29--70,
  2019.
\newblock ISSN 1526-1719.
\newblock URL \url{https://doi.org/10.7155/jgaa.00483}.

\bibitem[Kibble(1945)]{Kibble}
W.~F. Kibble.
\newblock An extension of a theorem of {M}ehler's on {H}ermite polynomials.
\newblock \emph{Proc. Cambridge Philos. Soc.}, 41:\penalty0 12--15, 1945.
\newblock ISSN 0008-1981.
\newblock URL \url{https://doi.org/10.1017/s0305004100022313}.

\bibitem[Kim et~al.(2012)Kim, Kim, Kim, and Dolgy]{KKKD}
D.~S. Kim, T.~Kim, Y.-H. Kim, and D.~V. Dolgy.
\newblock A note on {E}ulerian polynomials associated with {B}ernoulli and
  {E}uler numbers and polynomials.
\newblock \emph{Adv. Stud. Contemp. Math. (Kyungshang)}, 22\penalty0
  (3):\penalty0 379--389, 2012.
\newblock ISSN 1229-3067.

\bibitem[Kim(2012)]{Kim}
T.~Kim.
\newblock Identities involving {F}robenius-{E}uler polynomials arising from
  non-linear differential equations.
\newblock \emph{J. Number Theory}, 132\penalty0 (12):\penalty0 2854--2865,
  2012.
\newblock ISSN 0022-314X,1096-1658.
\newblock URL \url{https://doi.org/10.1016/j.jnt.2012.05.033}.

\bibitem[Kim and Mansour(2014)]{KM}
T.~Kim and T.~Mansour.
\newblock Umbral calculus associated with {F}robenius-type {E}ulerian
  polynomials.
\newblock \emph{Russ. J. Math. Phys.}, 21\penalty0 (4):\penalty0 484--493,
  2014.
\newblock ISSN 1061-9208,1555-6638.
\newblock URL \url{https://doi.org/10.1134/S1061920814040062}.

\bibitem[Lugo(2009{\natexlab{a}})]{Lugo}
M.~Lugo.
\newblock Profiles of permutations.
\newblock \emph{Electron. J. Combin.}, 16\penalty0 (1):\penalty0 Research Paper
  99, 20, 2009{\natexlab{a}}.
\newblock ISSN 1077-8926.
\newblock URL \url{https://doi.org/10.37236/188}.

\bibitem[Lugo(2009{\natexlab{b}})]{LugoCompose}
M.~T. Lugo.
\newblock The cycle structure of compositions of random involutions.
\newblock \emph{arXiv:0911.3604 [math.CO]}, 2009{\natexlab{b}}.

\bibitem[Manstavichyus(1996)]{ManstaviciusOriginal}
E.~Manstavichyus.
\newblock Additive and multiplicative functions on random permutations.
\newblock \emph{Liet. Mat. Rink.}, 36\penalty0 (4):\penalty0 501--511, 1996.
\newblock ISSN 0132-2818.
\newblock URL \url{https://doi.org/10.1007/BF02986863}.

\bibitem[Manstavicius(1998)]{Manstavicius}
E.~Manstavicius.
\newblock The {B}erry-{E}sseen bound in the theory of random permutations.
\newblock volume~2, pages 185--199. 1998.
\newblock URL \url{https://doi.org/10.1023/A:1009722210949}.
\newblock Paul Erd\H os (1913--1996).

\bibitem[Nicolas(1985)]{Nicolas}
J.-L. Nicolas.
\newblock Distribution statistique de l'ordre d'un \'el\'ement du groupe
  sym\'etrique.
\newblock \emph{Acta Math. Hungar.}, 45\penalty0 (1-2):\penalty0 69--84, 1985.
\newblock ISSN 0236-5294,1588-2632.
\newblock URL \url{https://doi.org/10.1007/BF01955024}.

\bibitem[Petersen and Tenner(2013)]{PT}
T.~K. Petersen and B.~E. Tenner.
\newblock How to write a permutation as a product of involutions (and why you
  might care).
\newblock \emph{Integers}, 13:\penalty0 Paper No. A63, 20, 2013.
\newblock ISSN 1553-1732.

\bibitem[Robbins(1955)]{Robbins}
H.~Robbins.
\newblock A remark on {S}tirling's formula.
\newblock \emph{Amer. Math. Monthly}, 62:\penalty0 26--29, 1955.
\newblock ISSN 0002-9890,1930-0972.
\newblock URL \url{https://doi.org/10.2307/2308012}.

\bibitem[Roberts and Vivaldi(2009)]{RV}
J.~A.~G. Roberts and F.~Vivaldi.
\newblock A combinatorial model for reversible rational maps over finite
  fields.
\newblock \emph{Nonlinearity}, 22\penalty0 (8):\penalty0 1965--1982, 2009.
\newblock ISSN 0951-7715,1361-6544.
\newblock URL \url{https://doi.org/10.1088/0951-7715/22/8/011}.

\bibitem[Schneider and Sills(2020)]{SS}
R.~Schneider and A.~V. Sills.
\newblock The product of parts or ``norm'' of a partition.
\newblock \emph{Integers}, 20A:\penalty0 Paper No. A13, 16, 2020.
\newblock ISSN 1553-1732.

\bibitem[Wilf(1993)]{Wilf}
H.~S. Wilf.
\newblock The asymptotic behavior of the {S}tirling numbers of the first kind.
\newblock \emph{J. Combin. Theory Ser. A}, 64\penalty0 (2):\penalty0 344--349,
  1993.
\newblock ISSN 0097-3165,1096-0899.
\newblock URL \url{https://doi.org/10.1016/0097-3165(93)90103-F}.

\bibitem[Wright(1932)]{Wright1}
E.~M. Wright.
\newblock The {C}oefficients of a {C}ertain {P}ower {S}eries.
\newblock \emph{J. London Math. Soc.}, 7\penalty0 (4):\penalty0 256--262, 1932.
\newblock ISSN 0024-6107,1469-7750.
\newblock URL \url{https://doi.org/10.1112/jlms/s1-7.4.256}.

\bibitem[Wright(1949)]{Wright2}
E.~M. Wright.
\newblock On the coefficients of power series having exponential singularities.
  {II}.
\newblock \emph{J. London Math. Soc.}, 24:\penalty0 304--309, 1949.
\newblock ISSN 0024-6107,1469-7750.
\newblock URL \url{https://doi.org/10.1112/jlms/s1-24.4.304}.

\bibitem[Yang et~al.(2013)Yang, Ellis, Mamakani, and Ruskey]{YEMR}
Q.~Yang, J.~Ellis, K.~Mamakani, and F.~Ruskey.
\newblock In-place permuting and perfect shuffling using involutions.
\newblock \emph{Inform. Process. Lett.}, 113\penalty0 (10-11):\penalty0
  386--391, 2013.
\newblock ISSN 0020-0190,1872-6119.
\newblock URL \url{https://doi.org/10.1016/j.ipl.2013.02.017}.

\bibitem[Zakharovas(2002)]{Zacharovas}
V.~Zakharovas.
\newblock On the rate of convergence of a random variable defined on random
  polynomials to the normal law.
\newblock \emph{Liet. Mat. Rink.}, 42\penalty0 (1):\penalty0 113--138, 2002.
\newblock ISSN 0132-2818.
\newblock URL \url{https://doi.org/10.1023/A:1015077919456}.

\end{thebibliography}
\label{sec:biblio}

\end{document}